\newlength{\matrixheight}
\newlength{\padabove}
\theoremstyle{plain}
\newtheorem{thm}{Theorem}[]
\newtheorem{lem}[thm]{Lemma}
\newtheorem{cor}[thm]{Corollary}
\theoremstyle{definition}
\newtheorem{dfn}[thm]{Definition}
\newtheorem{term}[thm]{Terminology}
\newtheorem{warn}[thm]{Warning}
\newtheorem{nota}[thm]{Notation}
\newtheorem{setup}[thm]{Setup}
\newtheorem{exa}[thm]{Example}
\newtheorem{rem}[thm]{Remark}
\theoremstyle{remark}
\DeclareMathOperator{\SL}{SL}
\DeclareMathOperator{\PGL}{PGL}
\DeclareMathOperator{\Cr}{Cr}
\DeclareMathOperator{\NKLT}{NKLT}
\DeclareMathOperator{\nklt}{nklt}
\DeclareMathOperator{\mult}{mult}
\DeclareMathOperator{\divisor}{div}
\DeclareMathOperator{\centre}{z}
\DeclareMathOperator{\Proj}{Proj}
\DeclareMathOperator{\Div}{Div}
\DeclareMathOperator{\NEbar}{\overline{NE}}
\DeclareMathOperator{\Effbar}{\overline{Eff}}
\DeclareMathOperator{\Supp}{Supp}
\newcommand{\oo}{\mathcal{O}}
\newcommand{\QQ}{\mathbb{Q}}
\newcommand{\RR}{\mathbb{R}}
\newcommand{\PP}{\mathbb{P}}
\newcommand{\ZZ}{\mathbb{Z}}
\newcommand{\CC}{\mathbb{C}}
\newcommand{\GG}{\mathbb{G}}
\newcommand{\vf}{\varphi}
\newcommand{\ve}{\varepsilon}
\newcommand{\Kc}[2]{K_{#1}+#2_{#1}}
\begin{document}
\bibliographystyle{plain}

\title{The Sarkisov program for Mori fibred Calabi--Yau pairs}

\author{Alessio Corti \thanks{a.corti@imperial.ac.uk}\\
 Department of Mathematics\\
 Imperial College London\\
 180 Queen's Gate\\
 London, SW7 2AZ, UK\\
\and
 Anne-Sophie Kaloghiros \thanks{anne-sophie.kaloghiros@brunel.ac.uk}\\
 Department of Mathematics\\
 Brunel University\\
 Uxbridge\\
 Middlesex, UB8 3PH,UK}

\date{\today}

\maketitle

\begin{abstract} We prove a version of the Sarkisov program for volume
  preserving birational maps of Mori fibred Calabi--Yau pairs valid in
  all dimensions. Our theorem generalises the theorem of Usnich and
  Blanc on factorisations of birational maps of $(\CC^\times)^2$ that
  preserve the volume form $\frac{dx}{x}\wedge \frac{dy}{y}$.
\end{abstract}

\tableofcontents{}

\section{Introduction}
\label{sec:introduction}

%This contains the intro with the main statements
Usnich~\cite{usnich06:_sympl_cp_thomp_t} and Blanc~\cite{MR3080816}
proved that the group of birational automorphisms of $\GG_m^2$ that
preserve the volume form $\frac{dx}{x}\wedge \frac{dy}{y}$ is
generated by $\GG_m^2$, $\SL_2(\ZZ)$, and the birational map:
\[
P\colon (x, y) \dasharrow \big(y, \frac{1+y}{x}\big)
\]
In this paper we prove a generalisation of this result valid in all
dimensions. Our theorem generalises the theorem of Usnich and Blanc in
the same way that the Sarkisov program~\cite{MR1311348},
\cite{MR3019454} generalises the theorem of Noether and Castelnuovo
stating that $\Cr_2$ is generated by $\PGL_3(\CC)$ and a standard
quadratic transformation
\[
C\colon (x_0: x_1: x_2) \dasharrow \Bigl( \frac1{x_0}:\frac1{x_1}:
\frac1{x_2} \Bigr)
\] 

Our main result is the following:

\begin{thm}
  \label{thm:1}
  A volume preserving birational map between Mori fibred Calabi--Yau pairs is a
  composition of volume preserving Sarkisov links.
\end{thm}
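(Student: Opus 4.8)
The plan is to run the Sarkisov program of \cite{MR1311348, MR3019454} for the underlying birational map of Mori fibre spaces $X/S \dasharrow X'/S'$, carrying the boundary divisors along crepantly, and to check that with appropriate choices every link produced is volume preserving. To get started, fix a general ample $\QQ$-divisor $A'$ on $X'$ with $(X', D_{X'}+A')$ log canonical, let $\mathcal{H}'$ be the mobile linear system $|mA'|$ for suitably divisible $m$, and let $\mathcal{H}$ be its strict transform on $X$. As usual, measure $(X, \mathcal H)$ by its Sarkisov degree: the threshold $\mu = \mu(X, D_X, \mathcal H)$ obtained by comparison with the corresponding threshold on $X'$, together with the secondary invariants (a second threshold $\lambda$ and a Shokurov-type difficulty) that control termination.

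Now iterate the following. Apply the Noether--Fano--Iskovskikh criterion to the pair $(X, D_X + \mu \mathcal H)$. If there is no maximal singularity, then $X/S$ and $X'/S'$ coincide up to isomorphism of Mori fibre spaces, and we stop. Otherwise pick a maximal singularity --- a divisorial valuation $E$ over $X$, or a destabilising extremal contraction --- and build the associated Sarkisov link $\psi \colon X/S \dasharrow X_1/S_1$ out of the steps of a $(K_X + \mu\mathcal H)$-MMP run with scaling: extract $E$, run flips, then a divisorial contraction or a change of fibration. Two points need checking. First, since $K_X + D_X \sim_\QQ 0$, each intermediate variety carries a unique crepant boundary $D$; because $(X, D_X)$ is log canonical the relevant valuation has nonnegative log discrepancy, so $D$ stays effective and the pair stays log canonical and Calabi--Yau. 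Hence $X_1/S_1$ is again a Mori fibred Calabi--Yau pair, and $\psi$, together with these crepant boundaries, is a volume preserving link. Second, the strict transform $\mathcal H_1$ of $\mathcal H'$ on $X_1$ has strictly smaller Sarkisov degree than $\mathcal H$ on $X$. Replace $(X, D_X, \mathcal H)$ by $(X_1, D_{X_1}, \mathcal H_1)$ and repeat.

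The Sarkisov degree takes values in a set ordered so that it cannot decrease forever, so --- using the existence of minimal models and termination of MMP with scaling, exactly as in the ordinary Sarkisov program --- the iteration halts. When it does, Noether--Fano identifies the final Mori fibred Calabi--Yau pair with $(X', D_{X'})/S'$; since the composite of $\phi$ with all the links just constructed is both an isomorphism of Mori fibre spaces and volume preserving (being a composite of volume preserving maps), reading the relations backwards exhibits $\phi$ as a composition of volume preserving Sarkisov links.

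The crux is not termination, which is inherited from the ordinary program, but the synchronisation inside the iteration: one must run the $(K + \mu\mathcal H)$-MMP that builds each link so that every elementary step is \emph{simultaneously} crepant for the Calabi--Yau structure. This rests on controlling maximal singularities --- showing they have nonnegative log discrepancy for $(X, D_X)$, and that extracting or contracting them introduces no worse-than-log-canonical singularities, via the behaviour of log canonical pairs under crepant birational maps --- and on the existence and termination of the required relative MMP in this Calabi--Yau setting. A secondary subtlety is to choose $A'$, equivalently $\mathcal H'$, general enough that none of the thresholds or log discrepancies involved are computed on a non-klt place of $(X', D_{X'})$, so that the comparison defining $\mu$ behaves well.
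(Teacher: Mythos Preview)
Your proposal has a genuine gap at the heart of the iteration. You write: ``since $(X,D_X)$ is log canonical the relevant valuation has nonnegative log discrepancy, so $D$ stays effective''. But nonnegative log discrepancy means $a(E,K_X+D_X)\ge -1$, whereas for the crepant boundary on the extraction $Z\to X$ to be effective you need the coefficient $-a(E,K_X+D_X)$ of $E$ to be $\ge 0$, i.e.\ $a(E,K_X+D_X)\le 0$. The maximal singularity of the ordinary Sarkisov program is chosen with respect to $(X,\mu\mathcal H)$, not $(X,D_X)$, and there is no reason for it to satisfy $a(E,K_X+D_X)\le 0$; in general it will not. When $a(E,K_X+D_X)>0$ the crepant transform of $D_X$ to $Z$ acquires a negative coefficient along $E$, so $(Z,D_Z)$ is not a CY pair in the sense of Definition~\ref{dfn:1} and the link you build is not volume preserving in the sense of Definition~\ref{dfn:4}. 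The paper flags exactly this tension in the introduction: the Sarkisov program for varieties does not preserve singularities of pairs.

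The paper's fix is structural rather than step-by-step. Theorem~\ref{thm:2} first produces a common $\QQ$-factorial (t,~dlt) roof $(Y,D_Y)$ dominating both ends, with $Y\dasharrow Y'$ a chain of Mori flips, flops and inverse flips. Theorem~\ref{thm:3} then runs a Hacon--McKernan style Sarkisov program \emph{under $Y$}: the links $X_i/S_i\dasharrow X_{i+1}/S_{i+1}$ are built so that every induced map $g_i\colon Y\dasharrow X_i$ is contracting. This is the point you are missing: once $g_i$ is contracting, $D_i:=g_{i\,\star}D_Y$ is automatically an effective reduced divisor, and since $K_Y+D_Y\sim 0$ is numerically trivial every step of the MMP from $Y$ is $(K_Y+D_Y)$-crepant, so $g_i$ is volume preserving and $(X_i,D_i)$ is a (t,~lc) CY pair. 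Your invariant-based approach never secures a fixed roof that surjects (in the divisorial sense) onto all intermediate models, and without that there is no mechanism guaranteeing effectivity of the crepant boundary after an extraction.
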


In the rest of the introduction, we introduce the terminology needed
to make sense of the statement and, along the way, we state the more
general factorisation theorem~\ref{thm:2} for volume preserving
birational maps of general Calabi--Yau pairs. Theorem~\ref{thm:2} is
used in the proof of the main result and is of independent
interest. We conclude with some additional remarks.

\begin{dfn}
  \label{dfn:1}
  \begin{enumerate}[(1)]
  \item A \emph{Calabi--Yau (CY) pair} is a pair $(X, D)$ of a normal variety
    $X$ and a reduced $\ZZ$-Weil divisor $D\subset X$ such that $K_X+D\sim 0$
    is a Cartier divisor linearly equivalent to $0$.
  \item We say that a pair $(X,D)$ has \emph{(t,~dlt)}, resp.\ \emph{(t,~lc)}
    singularities or that it ``is'' (t,~dlt), resp.\ (t,~lc) if $X$
    has terminal singularities and the pair $(X,D)$ has dlt, resp.\ lc
    singularities. 

    Similarly $(X,D)$ has \emph{(c,~dlt)}, resp.\ \emph{(c,~lc)}
    singularities or ``is'' (c,~dlt), resp.\ (c,~lc) if $X$
    has canonical singularities and the pair $(X,D)$ has dlt, resp.\ lc
    singularities.

  \item We say that a pair $(X, D)$ is \emph{$\QQ$-factorial} if $X$
    is $\QQ$-factorial.
  \end{enumerate}
\end{dfn}

\begin{rem}
  \label{rem:1}
  \begin{enumerate}[(1)]
  \item We use the following observation throughout:
    If $(X,D)$ is a CY pair, then, because $K_X+D$ is an integral
    Cartier divisor, for all geometric valuations $E$,
    $a(E,K_X+D)\in \ZZ$. If in addition $(X,D)$ is lc or dlt, then
    $a(E,K_X+D)\leq 0$ implies $a(E,K+D)=-1$ or $0$.
  \item If $(X,D)$ is a dlt CY pair, then automatically it is (c,~dlt). More
    precisely if $E$ is a geometric valuation with small centre on $X$
    and if the centre $\centre_X E\in \Supp D$, then $a(E,K_X)>0$.

  Indeed, consider a valuation $E$ with small centre on $X$. Then
\[a(E,K_X)=a(E,K_X+D)+\mult_E \overline{D}
\]
therefore $a(E,K_X)\leq 0$ implies $a(E, K_X+D)\leq 0$ and then,
because $K_X+D$ is a Cartier divisor, \emph{either} $a(E, K_X+D)=-1$,
which is impossible because by definition of dlt, see remark~\ref{rem:NKLT}
below, $z=\centre_X E\in X$ is smooth, \emph{or} $a(E, K_X+D)=0$ and
$\mult_E \overline {D} =0$.
  \end{enumerate}
\end{rem}

\begin{dfn}
  \label{dfn:2}
  Let $(X,D_X)$ and $(Y,D_Y)$ be CY pairs. A birational map $\vf\colon
  X \dasharrow Y$ is \emph{volume preserving} if there exists a common
  log resolution
\[
\xymatrix{ & W\ar[dl]_p\ar[dr]^q & \\
X \ar@{-->}[rr]^\varphi& & Y
}
\]
such that $p^\star (K_X+D_X)= q^\star (K_Y+D_Y)$. It is essential to
the definition that we are requiring equality here not linear
equivalence.
\end{dfn}

\begin{rem}
  \label{rem:7}
  If $(X,D_X)$ is a CY pair then there is a (unique up to
  multiplication by a nonzero constant) rational differential
  $\omega_X\in \Omega^n_{k(X)/k}$ (where $n=\dim X$) such that
  $D_X+\divisor_X \omega_X\geq 0$. Similarly there is a distinguished
  rational differential $\omega_Y$ on $Y$. To say that $\vf$ is volume
  preserving is to say $\omega_X=\omega_Y$ (up to multiplication by a
  nonzero constant) under the identification
  $\Omega^n_{k(X)/k}=\Omega^n_{k(Y)/k}$ given by $\vf$. Equivalently,
  $\vf$ is volume preserving if for all geometric valuations $E$,
  $a(E, K_X+D_X)=a(E, K_Y+D_Y)$. It follows that the notion is
  independent of the choice of common log resolution.

  Volume preserving maps are called crepant birational in \cite{MR3057950}.
\end{rem}

\begin{rem}
  \label{rem:2}
  The composition of two volume preserving maps is volume preserving.
\end{rem}

The first step in the proof of theorem~\ref{thm:1} is the following general
factorisation theorem for volume preserving birational maps between lc
CY pairs, which is of independent
interest. See~\cite[Lemma~12(4)]{kolla15:_calab} for a similar statement.

\begin{thm}
  \label{thm:2}
  Let $(X, D)$ and $(X^\prime, D^\prime)$ be lc CY pairs and $\vf
  \colon X\dasharrow X^\prime$ a volume preserving birational
  map. Then there are $\QQ$-factorial (t,~dlt) CY pairs $(Y, D_Y)$, $(Y^\prime,
  D_{Y^\prime})$ and a commutative diagram of birational maps:
\[
\xymatrix{Y\ar[d]_g\ar@{-->}[r]^\chi & Y^\prime\ar[d]^{g^\prime}\\
X \ar@{-->}[r]^\vf &  X^\prime
}
\]
 where:
 \begin{enumerate}[(1)]
 \item the morphisms $g\colon Y\to X$, $g^\prime\colon Y^\prime \to
   X^\prime$ are volume preserving;
 \item $\chi\colon Y \dasharrow Y^\prime$ is a volume preserving
   isomorphism in codimension $1$ which is a composition of volume
   preserving Mori flips, flops and inverse flips (not necessarily in
   that order).
 \end{enumerate}
 \end{thm}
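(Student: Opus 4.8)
We want to show: given lc CY pairs $(X,D)$ and $(X',D')$ with a volume preserving birational map $\varphi: X \dashrightarrow X'$, we can find $\mathbb{Q}$-factorial (t,dlt) CY pairs $(Y, D_Y)$ and $(Y', D_{Y'})$ with volume preserving morphisms down to $X$ and $X'$ respectively, such that the induced map $\chi: Y \dashrightarrow Y'$ is an isomorphism in codimension 1 decomposing into volume preserving Mori flips/flops/inverse flips.

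Key idea: The natural candidate for $Y$ is a "$\mathbb{Q}$-factorial dlt modification" (or dlt blowup) of $(X,D)$. This is a standard tool: given an lc pair, there's a birational morphism $g: Y \to X$ extracting exactly the divisors with discrepancy $-1$ (plus possibly making things $\mathbb{Q}$-factorial), so that $(Y, D_Y)$ is dlt with $K_Y + D_Y = g^*(K_X+D)$. Since $K_X + D \sim 0$, we get $K_Y + D_Y \sim 0$, so $(Y, D_Y)$ is a CY pair; and since the pullback is an equality, $g$ is volume preserving. Getting $Y$ terminal on top of dlt requires a bit more care — one uses the fact that for a dlt CY pair, Remark 1(2) tells us it's automatically (c,dlt), and then a further terminalization / small modification handles the terminal part.

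So here's the structure of the proof I'd attempt:

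First, take $\mathbb{Q}$-factorial dlt modifications $g_0: Y_0 \to X$ and $g'_0: Y'_0 \to X'$ of the two lc CY pairs. These exist by standard MMP technology (e.g., via Birkar–Cascini–Hacon–McKernan or the references the paper will cite), and they are volume preserving since $K_{Y_0} + D_{Y_0} = g_0^*(K_X+D)$ exactly. Both $(Y_0, D_{Y_0})$ and $(Y'_0, D_{Y'_0})$ are now $\mathbb{Q}$-factorial dlt CY pairs. The induced birational map $\chi_0: Y_0 \dashrightarrow Y'_0$ is volume preserving (composition of volume preserving maps, Remark 2).

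Second — and this is the heart — I need to show that a volume preserving birational map between two $\mathbb{Q}$-factorial dlt CY pairs can be modified, by further volume preserving birational morphisms $Y \to Y_0$ and $Y' \to Y'_0$, so that it becomes an isomorphism in codimension 1, and moreover that this iso-in-codimension-1 map factors as a sequence of flips, flops and inverse flips, all of which are volume preserving. The "becomes iso in codim 1" part: a volume preserving map between dlt CY pairs doesn't contract or extract any divisor that appears in $D$ (since the log discrepancies agree and are $\geq -1$, actually $=-1$ on such divisors), but it might contract/extract divisors with nonpositive discrepancy that lie in the smooth locus. The trick is to further blow up both sides to extract all the common relevant divisors — since $K+D \sim 0$ and volume preserving, the set of valuations $E$ with $a(E, K_X+D) \leq 0$ is finite (they all have discrepancy $0$ or $-1$ by Remark 1(1)), and we can arrange $Y$ and $Y'$ to both be $\mathbb{Q}$-factorial dlt models extracting precisely this common finite set. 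Then $\chi: Y \dashrightarrow Y'$ is automatically an isomorphism in codimension 1.

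Third, for the factorization of $\chi$ into flips/flops/inverse flips: this is where one runs a $K_Y$-MMP (equivalently, since $K_Y + D_Y \sim 0$, a $(-D_Y)$-MMP or a suitable MMP) over a common resolution, or uses the fact that any two $\mathbb{Q}$-factorial terminal models related by an isomorphism in codimension 1 are connected by flops — but here we want flips/flops/inverse flips because $Y$ and $Y'$ are only models of $X$ and $X'$ respectively, not of a common variety. The correct tool is a relative MMP: run a $K_Y$-MMP with scaling over $X'$ (after resolving $\varphi$), each step being a divisorial contraction or flip of $K_Y$; since $K_Y = -D_Y + (\text{exceptional stuff})$ and everything is volume preserving, these steps are $K_Y$-flips which are $(K_Y+D_Y)$-trivial, i.e. volume preserving flops/flips. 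One must check each step preserves the $\mathbb{Q}$-factorial (t,dlt) CY condition and stays volume preserving — this uses Remark 1 repeatedly to control discrepancies. The MMP terminates (by BCHM-type results), and termination with the right scaling means we land on $Y'$.

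**The main obstacle.** The hard part is the factorization in step three: controlling an MMP so that every single step is volume preserving (i.e. $(K+D)$-crepant), that the (t,dlt) and $\mathbb{Q}$-factorial CY properties are preserved at each stage, and that the MMP actually terminates at $Y'$ rather than some other model. The subtlety is that "volume preserving" is an equality condition, very rigid, and one must verify that the flips produced by a $K_Y$-MMP don't disturb the relation $K + D \sim 0$ or change log discrepancies — this forces careful bookkeeping of which divisors are extracted/contracted and invocation of the integrality/discreteness from Remark 1(1). A secondary technical point is establishing that the two sides can be "synchronized" to extract a common set of divisors so that $\chi$ is iso in codimension 1 to begin with; if the dlt modifications are chosen naively they won't match, and one needs an argument (again via MMP or via explicitly prescribed valuations) to produce matching models. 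I would expect the paper to cite or prove a relative MMP statement — along the lines of "run the $D_Y$-MMP over $X'$" — and then verify volume-preservation of each step as a lemma.
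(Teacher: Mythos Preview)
Your overall strategy---dlt modifications on both sides, then an MMP to connect them---is in the right neighbourhood, but there are two genuine gaps and one place where the paper's argument is substantially different from yours.

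First, the ``synchronization'' step does not work as you describe it. Your claim that the set of geometric valuations $E$ with $a(E,K_X+D)\leq 0$ is finite is false: already for $(\mathbb{A}^2, L_1+L_2)$ with two lines through the origin there are infinitely many toric valuations with discrepancy $-1$. So you cannot simply ``extract the common finite set.'' The paper sidesteps this by taking a \emph{common} log resolution $f\colon W\to X$, $f'\colon W\to X'$ at the outset, and then running the $(K_W+D_W)$-MMP over $X$ (resp.\ over $X'$) to produce $Y$ (resp.\ $Y'$). Because $\varphi$ is volume preserving, $D_W=f^\flat D=f'^\flat D'$ and the positive part $F$ is literally the same divisor for both sides; Lemma~\ref{lem:3}(4) then says that both MMPs contract exactly the components of $F$, so $Y$ and $Y'$ automatically have the same divisors and $\chi$ is an isomorphism in codimension one without any further work. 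This common-resolution trick is the key idea you are missing.

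Second, your factorisation step is vague and, as written, does not make sense: you cannot run an MMP ``over $X'$'' because there is no morphism $Y\to X'$, and ``over a common resolution'' has the arrow pointing the wrong way. The paper instead runs an \emph{absolute} MMP: choose $L'$ ample on $Y'$ and set $\Theta'=L'+(1-\varepsilon)D_{Y'}$ so that $(Y',\Theta')$ is klt with $K_{Y'}+\Theta'$ ample; then $(Y',\Theta')$ is the log canonical model of $(Y,\Theta)$, and the $(K_Y+\Theta)$-MMP with scaling runs directly from $Y$ to $Y'$. Each step is a $(K+\Theta)$-flip which is simultaneously a $(K+D)$-flop, hence volume preserving. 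The delicate part---which you correctly flag but do not address---is showing that every intermediate $(Y_i,D_i)$ remains (t,~dlt). The paper does this by proving (via Lemma~\ref{lem:3}(1)) that the MMP is an isomorphism on a Zariski neighbourhood of $\NKLT$ throughout, which gives dlt; terminality of $Y_i$ is then argued by contradiction using the integrality observation of Remark~\ref{rem:1} together with this same control of the nonklt locus.
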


\begin{dfn}
  \label{dfn:3}
  A \emph{Mori fibred (Mf) CY pair} is a $\QQ$-factorial (t,~lc) CY pair
  $(X,D)$ together with a Mori fibration $f\colon X \to S$. Recall
  that this means that $f_\star \oo_X=\oo_S$, $-K_X$ is $f$-ample, and
  $\rho(X)-\rho(S)=1$.
\end{dfn}

\paragraph{Terminology}
\label{sec:terminology-1}

We use the following terminology throughout.

\begin{itemize}
\item A \emph{Mori divisorial contraction} is an extremal divisorial
  contraction $f\colon Z \to X$ from a $\QQ$-factorial terminal
  variety $Z$ of an extremal ray $R$ with $K_Z\cdot R<0$. In
  particular $X$ also has $\QQ$-factorial terminal singularities. 

  If $(Z, D_Z)$ and $(X,D_X)$ are (t,~lc) CY pairs, then it makes sense to
  say that $f$ is volume preserving. In this context, this is
  equivalent to saying that $K_Z+D_Z=f^\star (K_X+D_X)$ and, in
  particular, $D_X=f_\star (D_Z)$.
\item A birational map $t\colon Z \dasharrow Z^\prime$ is a \emph{Mori
    flip} if $Z$ has $\QQ$-factorial terminal singularities and $t$ is
  the flip of an extremal ray $R$ with $K_Z\cdot R<0$. Note that this
  implies that $Z^\prime$ has $\QQ$-factorial terminal
  singularities. 

  An \emph{inverse Mori flip} is the inverse of a Mori
  flip.  

  A birational map $t\colon Z \dasharrow Z^\prime$ is a
  \emph{Mori flop} if $Z$ and $Z^\prime$ have $\QQ$-factorial terminal
  singularities and $t$ is the flop of an extremal ray $R$ with
  $K_Z\cdot R=0$.

  Again if $(Z,D_Z)$ and $(Z^\prime, D_{Z^\prime})$ are (t,~lc) CY pairs,
  it makes sense to say that $t$ is volume preserving. One can see
  that this just means that $D_{Z^\prime}=t_\star D_Z$.
\end{itemize}

\begin{dfn}
  \label{dfn:4}
  Let $(X, D)$ and $(X^\prime, D^\prime)$ be Mf CY pairs with Mori
  fibrations $X\to S$ and $X^\prime \to S^\prime$. A \emph{volume
    preserving Sarkisov link} is a volume preserving birational map
  $\vf \colon X \dasharrow X^\prime$ that is a Sarkisov link in the
  sense of \cite{MR1311348}. Thus $\vf$ is of one of the following
  types:
  \begin{enumerate}[(I)]
  \item A \emph{link of type I} is a commutative diagram:
\[
\xymatrix{ &  Z\ar[dl]\ar@{-->}[r] & X^\prime\ar[d] \\
X\ar[d] &  & \ar[lld]S^\prime \\
S & &  
}
\]
where $Z\to X$ is a Mori divisorial contraction and
$Z\dasharrow X^\prime$ a sequence of Mori flips,
flops and inverse flips;
\item A \emph{link of type II} is a commutative diagram:
\[
\xymatrix{ &  Z\ar[dl]\ar@{-->}[r] & Z^\prime\ar[dr] & \\
X\ar[d] &  & & X^\prime \ar[d]\\
S\ar@{=}[rrr] & & & S^\prime
}
\]
where $Z\to X$ and $X^\prime \to Z^\prime$ are Mori divisorial
contractions and $Z\dasharrow Z^\prime$ a sequence of Mori flips,
flops and inverse flips;
\item A \emph{link of type III} is the inverse of a link of type I;
\item A \emph{link of type IV} is a commutative diagram:
\[
\xymatrix{X\ar[d]\ar@{-->}[rr] & & X^\prime\ar[d]\\
S\ar[dr] & & S^\prime\ar[dl]\\
&T & & 
}
\]
where $X\dasharrow X^\prime$ is a sequence of Mori flips, flops and
inverse flips.
  \end{enumerate}
\end{dfn}

\begin{rem}
  \label{rem:6}
  It follows from the definition of Sarkisov link that all the
  divisorial contractions, flips, etc.\ that constitute it are
  volume preserving; in particular, all varieties in sight are
  naturally and automatically (t,~lc) CY pairs. 
\end{rem}

In order to appreciate the statement of our main theorem~\ref{thm:1},
it is important to be aware that, although all Mf CY pairs are only
required to have lc singularities \emph{as pairs}, we insist that all
varieties in sight have $\QQ$-factorial \emph{terminal}
singularities. Our factorisation theorem is at the same time a
limiting case of the Sarkisov program for pairs \cite{MR1460896} and a
Sarkisov program for varieties \cite{MR1311348}, \cite{MR3019454}. The
Sarkisov program for pairs usually spoils the singularities of the
underlying varieties, while the Sarkisov program for varieties does
not preserve singularities of pairs. The proof our main result is a
balancing act between singularities of pairs and of varieties.

We expect that it will be possible in some cases to classify
all volume preserving Sarkisov links and hence give useful
presentations of groups of volume preserving birational maps of
interesting Mf CY pairs. We plan to return to these questions in the
near future.

The paper is structured as follows. In Section~\ref{sec:mmp} we
develop some general results on CY pairs and volume preserving maps
between them and prove Theorem~\ref{thm:2}; in Section~\ref{sec:proof}
we prove Theorem~\ref{thm:1}.

\subsection*{Acknowledgements}
\label{sec:acknowledgements}

We thank Paolo Cascini, J\'{a}nos Koll\'{a}r and Vladimir Lazi\'{c}
for useful comments on a preliminary version. 

\section{Birational geometry of CY pairs}
\label{sec:mmp}
%general results of mmp type

\begin{dfn}
  \label{dfn:5}
  Let $(X, D)$ be a lc CY pair, and $f\colon W\to X$ a birational
  morphism. The \emph{log transform} of $D$ is the divisor
\[
D_W=f^\flat (D)=\sum_{a(E, K_X+D)=-1} E 
\]
where the sum is over all prime divisors $E\subset W$.
\end{dfn}

Lemma~\ref{lem:3} is a refinement of~\cite[Theorem 17.10]{MR1225842}
and~\cite[Theorem~4.1]{MR2802603}. In order to state it we need a
definition.

\begin{dfn}
  \label{dfn:10}
 Let $X$ be a normal variety. A \emph{geometric valuation} with centre on $X$
 is a valuation of the function field $K(X)$ of the form $\mult_E$
 where $E\subset Y$ is a divisor on a normal variety $Y$ with a
 birational morphism $f\colon Y\to X$. The \emph{centre} of $E$ on
 $X$, denoted $\centre_X E$, is the generic point of $f(E)$.

 Let $(X,D)$ be a lc pair. The \emph{nonklt set} is the set
\[
\NKLT (X,D)=\{z\in X\mid z=\centre_X E\; \;\text{where}\; a(E,K_X+D)=-1\}
\]
where $E$ is a geometric valuation of the function field of $X$ with
 centre the scheme theoretic point $\centre_X E \in X$.
\end{dfn}

\begin{warn}
  Our notion of nonklt set departs from common usage. Most authors work
  with the \emph{nonklt locus}---the Zariski closure of our
  nonklt set---which they denote $\nklt(X,D)$ (in lower case letters). 
\end{warn}

\begin{rem}
  \label{rem:NKLT}
  We use the following statement throughout. It is part of the
  definition of dlt pairs \cite[Definition~2.37]{MR1658959} that if
  $(X,D)$ is dlt where $D=\sum_{i=1}^r D_i$ with $D_i\subset X$ a
  prime divisor, then $\NKLT(X,D)$ is the set of generic points of the
\[
D_I=\cap_{i\in I} D_i
\quad
\text{where}
\quad
I\subset \{1,\dots, r \}
\]
and $X$ is nonsingular at all these points.
\end{rem}

\begin{lem}
  \label{lem:3}
  Let $(X,D)$ be a lc CY pair where $X$ is not necessarily proper,
  $f\colon W\to X$ a log resolution, and $D_W=f^\flat (D)$. 

The MMP for $K_W+D_W$ over $X$ with scaling of a divisor ample over
$X$ exists and terminates at a minimal model $(Y,D_Y)$ over $X$.

More precisely, this MMP consists of a sequence of steps:
\[
(W,D_W)=(W_0, D_0)\overset{t_0}{\dasharrow}\cdots
(W_i,D_i)\overset{t_i}{\dasharrow} (W_{i+1},D_{i+1})\cdots \dasharrow
(W_N,D_N)=(Y,D_Y) 
\] 
where $t_i\colon W_i\dasharrow W_{i+1}$ is the divisorial contraction
or flip of an extremal ray $R_i\subset \NEbar (W_i/X)$ with
$(K_{W_i}+D_i)\cdot R_i<0$, and we denote by $g_i\colon W_i\to X$ the
structure morphism and by $g\colon (Y, D_Y)\to (X,D)$ the end
result. Then:
  \begin{enumerate}[(1)]
  \item For all $i$, denote by $h_i\colon W\dasharrow W_i$ the induced
    map. For all $i$, there are Zariski open neighbourhoods:
\[
\NKLT (W,D_W)\subset U
\quad\text{and} \quad
\NKLT (W_i,D_i)\subset U_i
\]
such that $h_i|U:U\dasharrow U_i$ is an isomorphism;
  \item $D_Y=g^\flat D$ and $K_Y+D_Y=g^\star (K_X+D)$ (that is, $g$ is
    a dlt crepant blow-up);
  \item $(Y,D_Y)$ is a (t, dlt) CY pair. In particular, $Y$ has terminal singularities;
  \item The map $h\colon W\dasharrow Y$ contracts precisely the prime
    divisors $E\subset W$ with $a(E,K_X+D)>0$. In
    other words,  a $f$-exceptional divisor $E\subset W$ is not
    contracted by the map $h\colon W\dasharrow Y$ if and only if  $a(E,
    K_X+D)=0$ or $-1$.  
  \end{enumerate}
\end{lem}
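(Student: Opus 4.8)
The plan is to run the relative MMP for $K_W+D_W$ over $X$ with scaling, using the standard existence and termination results, and then to track how the nonklt set and the crepant structure behave along the way. First I would set up the MMP: since $f\colon W\to X$ is a log resolution, $(W,D_W)$ is dlt (indeed log smooth), $D_W$ is effective, and $D_W = f^\flat D$ picks out exactly the divisors of log discrepancy $-1$. Writing $D_W + \sum_{a(E,K_X+D)>0} a(E,K_X+D)\,E = f^\star(K_X+D) + \sum (\text{stuff})$, more precisely $K_W + D_W = f^\star(K_X+D) + E^+$ where $E^+ = \sum_{a(E,K_X+D)>0} a(E,K_X+D)\,E \geq 0$ is $f$-exceptional and effective. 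So $K_W+D_W$ is $f$-pseudoeffective (in fact effective over $X$), and the MMP over $X$ with scaling of an ample$/X$ divisor exists and terminates by the results of BCHM (in the non-proper case one works over affine charts of $X$, or invokes the relative BCHM directly); the outcome is a minimal model $g\colon (Y,D_Y)\to X$ on which $K_Y+D_Y$ is nef over $X$. Because $K_W+D_W \equiv_X E^+ \geq 0$ with $E^+$ exceptional, the negativity lemma forces the MMP to contract exactly $\Supp E^+$, i.e.\ exactly the divisors with $a(E,K_X+D)>0$, and on $Y$ one gets $K_Y+D_Y = g^\star(K_X+D)$ with equality (not just $\equiv$), using that $K_X+D\sim 0$ is Cartier and $g_\star(K_Y+D_Y)=K_X+D$. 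This gives (2) and (4) simultaneously; then $D_Y = g^\flat D$ is immediate since the divisors of discrepancy $-1$ are precisely those surviving to $Y$.

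For (3), the pair $(Y,D_Y)$ is dlt because dlt-ness is preserved under steps of a $(K+D)$-MMP (each $t_i$ is a step of an MMP from a dlt pair), and it is a CY pair since $K_Y+D_Y = g^\star(K_X+D)\sim 0$ is Cartier. The terminality of $Y$ itself is the content of Remark~\ref{rem:1}(2) applied to the dlt CY pair $(Y,D_Y)$: any geometric valuation $E$ with small centre on $Y$ has $a(E,K_Y) = a(E,K_Y+D_Y) + \mult_E \overline{D_Y}$, and if $a(E,K_Y)\leq 0$ then $a(E,K_Y+D_Y)\leq 0$, so — $K_Y+D_Y$ being Cartier and the pair dlt — either $a(E,K_Y+D_Y)=-1$, impossible at a smooth point of $Y$ as dlt requires (Remark~\ref{rem:NKLT}), or $a(E,K_Y+D_Y)=0$ with $\mult_E\overline{D_Y}=0$, giving $a(E,K_Y)=0$. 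In either case $a(E,K_Y)>0$ fails only vacuously, so in fact $a(E,K_Y)>0$ for all such $E$ with centre inside $\Supp D_Y$, and away from $\Supp D_Y$ we have $a(E,K_Y)=a(E,K_Y+D_Y)\geq 0$ with equality only for divisors, hence $Y$ is terminal.

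The substantive point is (1): that the MMP is an isomorphism near the nonklt set. I would prove this step by step, showing each $t_i\colon W_i\dasharrow W_{i+1}$ restricts to an isomorphism between neighbourhoods of $\NKLT(W_i,D_i)$ and $\NKLT(W_{i+1},D_{i+1})$, and then composing. The key observation is that every $D_j$ in the sequence is reduced with $K_{W_j}+D_j = g_j^\star(K_X+D)$, so $(W_j,D_j)$ is a dlt CY pair and $\NKLT(W_j,D_j)$ is exactly the union of (generic points of) intersections of components of $D_j$, with $W_j$ smooth there. Now $t_i$ contracts or flips an extremal ray $R_i$ with $(K_{W_i}+D_i)\cdot R_i < 0$; but $K_{W_i}+D_i$ is numerically trivial over $X$ (it equals $g_i^\star(K_X+D)$), contradiction — unless the contracted locus maps to a positive-dimensional subvariety of... no: the resolution is of the relative MMP, and the ray has $(K_{W_i}+D_i)\cdot R_i<0$ only because we are running the MMP for $K_W+D_W$, which on $W_i$ differs from $g_i^\star(K_X+D)$ by the remaining effective exceptional divisor $E_i^+\geq 0$. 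So $(K_{W_i}+D_i)\cdot R_i = E_i^+\cdot R_i < 0$ means $R_i$ meets $\Supp E_i^+$, i.e.\ the contracted/flipped locus lies in $\Supp E_i^+$, which is disjoint in codimension-appropriate sense from... here is the crux: I must show the flipping/flipped locus is disjoint from $\NKLT(W_i,D_i)$. Since $E_i^+$ has negative intersection with $R_i$, some component $E$ of $E_i^+$ is covered by curves in $R_i$; such $E$ has $a(E,K_X+D)>0$, so $E\not\subset D_i$ — but $E$ could still meet $D_i$, and meet $\NKLT$. The honest way is to use that the MMP has scaling and that $\NKLT(W_i,D_i)$ — being a union of log canonical centres — is preserved: by a connectedness/adjunction argument (Kollár's work on lc centres, cf.\ \cite{MR1658959}), no step of a dlt MMP with $K+D$ crepant over the base can modify an lc centre, because doing so would either change the discrepancy (impossible, everything is crepant over $X$) or require the step to be non-isomorphism there while $K_{W_i}+D_i$ is trivial over $X$ on that locus. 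I expect this — establishing that each elementary step is an isomorphism over a neighbourhood of the nonklt set, controlling also that the open sets $U_i$ can be chosen compatibly along the whole sequence — to be the main obstacle, handled by the observation that on a neighbourhood of $\NKLT(W_i,D_i)$ the pair is log smooth with $K+D$ trivial over $X$, so there is no extremal ray to contract there and the locus is untouched.
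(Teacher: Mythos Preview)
Your treatment of (2) and (4) via the negativity lemma matches the paper. For (1) you correctly note that the exceptional locus of each step lies in $\Supp F_i$ (where $K_W+D_W=f^\star(K_X+D)+F$), but you then appeal vaguely to connectedness and adjunction. The missing observation, which the paper uses, is elementary: on the log resolution $W$ the divisor $D_W\cup F$ is snc with no common components, so every point of $\NKLT(W,D_W)$ already lies in $U=W\setminus\Supp F$. Since each step is an isomorphism off $\Supp F_i$, the composite $h_i$ is an isomorphism on $U$, and one sets $U_i=h_i(U)$; that $\NKLT(W_i,D_i)\subset U_i$ then follows because discrepancies only increase along the MMP.

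Your argument for (3) has a genuine gap. You try to deduce terminality of $Y$ purely from $(Y,D_Y)$ being a dlt CY pair, but Remark~\ref{rem:1}(2) gives only (c,~dlt), and Example~\ref{exa:Dflop} shows that a dlt CY pair can have $Y$ canonical and not terminal; your phrase ``with equality only for divisors'' is exactly the unproved assertion. The paper's argument is not intrinsic to $(Y,D_Y)$: it uses (1) together with the smoothness of $W$. If $E$ has small centre on $Y$ with $a(E,K_Y)=a(E,K_Y+D_Y)=0$, then either every step of the MMP is an isomorphism near $\centre_{W_i}E$, in which case $a(E,K_W+D_W)=0$ and $\centre_W E\notin D_W$, contradicting $W$ smooth; or some step is not, forcing $a(E,K_W+D_W)<0$, hence $=-1$, hence $\centre_W E\in\NKLT(W,D_W)\subset U$, and then by (1) every step \emph{was} an isomorphism there after all, a contradiction.
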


\begin{proof} The MMP exists by~\cite[Theorem~4.1]{MR2802603}. In the
  rest of the proof we use the following well-known fact: if
  $E$ is a geometric valuation with centre on $W$, then for all $i$
\[
a(E,K_{W_i}+D_i)\leq a(E,K_{W_{i+1}}+D_{i+1})
\]
and the inequality is an equality if and only if $t_i\colon W_i\dasharrow
W_{i+1}$ is an isomorphism in a neighbourhood of $z_i=\centre_{W_i}
E$. In particular, this implies at once that $NKLT(W_i,D_i)\supset
\NKLT(W_{i+1},D_{i+1})$ and the two sets are equal if and only if
there exist Zariski open subsets as in (1), if and only if for all $E$
with $a(E,K+D_i)=-1$ $t_i$ is an isomorphism in a neighbourhood of
$z_{W_i} E$. 

Now write 
\[
K_W + D_W=f^\star (K_X+D)+ F
\]
with $F>0$, $f$-exceptional, with no components in common with
$D_W$. We are running a $F$-MMP, hence if $F_i\subset W_i$ denotes the
image of $F$, then the exceptional set of the map
$t_i\colon W_i \dasharrow W_{i+1}$ is contained in $\Supp F_i$,
see~\cite[\S~1.35]{MR3057950}. From this it follows that $h_i$ is an
isomorphism from $W\setminus \Supp F$ to its image in $W_i$. At the
start, $D_W$ has no components in common with $F$ and
$\Supp(D_W\cup F)$ is a snc divisor: thus, if $a(E, K_W+D_W)=-1$, then
$z_W E\not \in F$. It follows that
$\NKLT(W,D_W)\subset \NKLT(W_i,D_i)$. Together with what we said, this
implies (1).

As for (2), it is obvious that for all $i$ $D_i=g_i^\flat D$. By the
negativity lemma \cite[Lemma~3.39]{MR1658959} $F_i\neq 0$ implies $F_i$ not
nef, so the MMP ends at $g_N=g\colon W_N=Y \to X$ when $F_N=0$, that is,
$K_Y+D_Y=g^\star (K_X+D)$.

For (3) we need to show that $Y$ has terminal singularities. Suppose
that $E$ is a valuation with small centre $\centre_Y E$ on $Y$. By
what we said in Remark~\ref{rem:1}, either $a(E, K_X)>0$, or:
\begin{equation}
  \label{eq:1}
 a(E,K_X)=a(E,K_X+D)=0
\quad
\text{and}
\quad
\centre_Y E \not\in \Supp D_Y
\end{equation}
and we show that this second possibility leads to a
contradiction. Write $z_i=\centre_{W_i} E$. Note that $z_i\in W_i$ is
never a divisor for this would imply that $a(E,K_Y)>0$. By what we
said at the start of the proof, for all $i$, $a(E, K_{W_i}+D_i)\leq
a(E,K_{W_{i+1}}+D_{i+1})$ with strict inequality if and only if
$t_i\colon W_i \dasharrow W_{i+1}$ is not an isomorphism in a
neighbourhood of $z_i\in W_i$. There must be a point where strict
inequality occurs otherwise $z_0\not \in D_0$ and $W=W_0$ is not
terminal in a neibourhood of $z_0$. This, however, implies that $a(E,
K_W+D)<0$, that is, $a(E,K_W+D_W)=-1$ and then by (1) $h\colon W\to Y$
is an isomorphism in a neighbourhood of $z_0$, again a contradiction.

The last statement (4) is obvious. 
\end{proof}

\begin{exa}
  \label{exa:Dflop}
 This example should help appreciate the statement of
 Theorem~\ref{thm:2} and the subtleties of its proof. Let
 $E=\PP^1\times \PP^1$ and $W$ the
 total space of the vector bundle $\oo_E(-1,-2)$. Let $D_W\subset W$
 be a smooth surface such that $D_W\cap E$ is a ruling in $E$ and a
 $-2$-curve in $D_W$. Let $f\colon W \to Y$ be the contraction of $E$
 along the first ruling and $f^\prime \colon W\to Y^\prime$ the
 contraction along the second ruling. Then $(Y,D)$ and $(Y^\prime,
 D^\prime)$ are both dlt, $Y^\prime$ is terminal, $Y$ is canonical
 but not terminal, and the map $Y\dasharrow Y^\prime$ is volume
 preserving. 
\end{exa}

\begin{proof}[Proof of Theorem~\ref{thm:2}]
Let 
\[
\xymatrix{
  & W\ar[dl]_f\ar[dr]^{f'}& \\
X\ar@{-->}[rr]^\varphi &   & X^\prime}
\]
be a common log resolution. Since $\varphi$ is volume preserving, then for
all geometric valuations $E$ $a(E,K_X+D)=a(E,K_{X^\prime}+ D^\prime)$
and:
\[
K_W+D_W= f^\star (K_X+D)+ F= f^{\prime \, \star} (K_{X^\prime}+D^\prime)+ F
\]
where $D_W= f^\flat D = f^{\prime\, \flat} D^\prime$ and 
\[
F= \sum_{a_E(K_X+D)>0}a(E,K_X+D) E = \sum_{a_E(K_{X^\prime}+D^\prime)>0}a(E,K_{X^\prime}+D^\prime) E
\] 

Let $g\colon (Y, D_Y)\to (X, D)$ and
$g^\prime \colon (Y^\prime, D_{Y^\prime})\to (X^\prime,D^\prime)$ be the
end products of the $(K_W+D_W)$-MMP over $X$ and $X^\prime$ as in
Lemma~\ref{lem:3}, and denote by $\chi \colon Y\dasharrow Y^\prime$
the induced map. By Lemma~\ref{lem:3}(4) $\chi$ is an isomorphism in
codimension one.

Denote by $t\colon W\dashrightarrow Y$ and
$t^\prime \colon W\dashrightarrow Y^\prime$ the obvious maps and write
$\NKLT (W,D_W)\subset U_W= W\setminus \Supp F$; by
Lemma~\ref{lem:3}(1) $t{|U_W}$ and $t^\prime {|U_W}$ are
isomorphisms onto their images $\NKLT (Y, D_Y)\subset U\subset Y$ and
$\NKLT (Y^\prime, D_{Y^\prime})\subset U^\prime \subset Y^\prime$. It
follows from this that $\chi|U$ maps $U$ isomorphically to $U^\prime$. 

In the rest of the proof if $N$ is a divisor on $Y$ we denote by
$N^\prime$ its transform on $Y^\prime$ and conversely: because $\chi$ is an
isomorphism in codimension one it is clear what the notation means.
 
Let us choose, as we can by what we just said above, an ample
$\QQ$-divisor $L^\prime$ on $Y^\prime$ general enough that both
$(Y^\prime, D_{Y^\prime}+L^\prime)$ and $(Y, D_{Y}+L)$ are dlt. Let
$0<\varepsilon <\!<1$ be small enough that
$A^\prime = L^\prime -\varepsilon D^\prime$ is ample. Note that, again
by what we said above, writing
$\Theta^\prime=L^\prime +(1-\varepsilon)D_{Y^\prime}$, both pairs
$(Y^\prime,\Theta^\prime)$ and $(Y,\Theta)$ are klt.

Since $K_{Y^\prime}+\Theta^\prime \sim_\QQ A^\prime$ is ample,
$(Y^\prime, \Theta_{Y^\prime})$ is the log canonical model of $(Y,
\Theta)$. It follows that $\chi$ is the composition of finitely
many~\cite[Corollary~1.4.2]{MR2601039} flips
\[ 
\chi \colon Y= Y_0 \overset{\chi_0}{\dashrightarrow} Y_1
\overset{\chi_1}{\dashrightarrow} \cdots \overset{\chi_{N-1}}{\dashrightarrow} Y_N= Y^\prime
\]
of the MMP for $K_Y+\Theta$. If $N$ is a divisor on $Y$, denote by
$N_i$ its transform on $Y_i$. For all $i$, $\chi_i$ is a
$(K_{Y_i}+\Theta_i)$-flip and, at the same time, a
$(K_{Y_i}+D_i)$-flop, and hence all pairs $(Y_i,D_i)$ are lc. We next
argue that all $(Y_i,D_i)$ are in fact (t,dlt).

Because the MMP is a MMP for $A\sim_\QQ K_Y+\Theta$, the exceptional
set of $\chi_i$ is contained in $\Supp A_i$. From this it follows
that, writing $U_0=U$, $\chi_0|U_0$ is an isomorphism onto its image,
which we denote by $U_1$ and, by induction on $i$, $\chi_i|U_i$ is an
isomorphism onto its image, which we denote by $U_{i+1}$. We show by
induction that, for all $i$, $U_i$ is a Zariski neighbourhood of
$\NKLT(Y_i, D_i)$, so that $\chi_i$ is a local isomorphism at the
generic point of each $z\in \NKLT(Y_i, D_i)$ and $(Y_i, D_i)$ is a dlt
pair. Indeed assuming the statement for $i<k$ consider
$\chi_k\colon Y_k \dasharrow Y_{k+1}$. Let $E$ be a valuation with
discrepancy $a(E, K_{Y_{k+1}}+D_{k+1})=-1$, then also
$a(E, K_{Y_k}+D_k)=-1$, thus
$z_k=\centre_{Y_k} E\in \NKLT(Y_k,D_k)\subset U_k$ and then by what we
just said $\chi_k$ is an isomorphism at $z_k$, hence
$z_{k+1}=\chi_k (z_k)\in U_{k+1}$. This shows that all $(Y_i,D_i)$ are
dlt.

Finally we prove that for all $i$ $Y_i$ is terminal. Assume for a
contradiction that $Y_j$ is not terminal. By
Remark~\ref{rem:1}(2) $Y_j$ is canonical and
there is a geometric valuation $E$ with $a(E, K_{Y_j})=a(E,
K_{Y_j}+D_j)=\mult_E \overline{D}_j =0$, and then also $a(E,K_Y+D_Y)=
a(E,K_{Y^\prime}+D_{Y^\prime})=0$. Since $Y$ is terminal, $a(E,
K_Y)>0$, and $\centre_YE\not \in U$, and $\centre_W E \in \Supp F$, but then
$a(E, K_Y+D_Y)>a(E, K_W+D_W)$, so that we must have that $a(E,
K_W+D_W)=-1$, that is $\centre_W E\in \NKLT (W,D_W)\subset U_W$ and
this is a contradiction.
\end{proof}

\section{Sarkisov program under $Y$}
\label{sec:proof}
%proof of the main theorem

\subsection{Basic Setup}
\label{sec:basic-setup}

We fix the following situation, which we keep in force throughout this section:
\[
\xymatrix{Y \ar[d]_g \ar@{-->}[rr]^\chi& & Y^\prime \ar[d]^{g^\prime}&  \\
X\ar[d]_p\ar@{-->}[rr]^\varphi & & X^\prime\ar[d]^{p^\prime}\\
S & & S^\prime}
\]
\begin{enumerate}[(i)]
\item $Y$ and $Y^\prime$ have $\QQ$-factorial terminal singularities
  and $g\colon Y \to X$ and $g^\prime \colon Y \to X^\prime$ are birational morphisms.
\item $\chi \colon Y\dasharrow Y^\prime$ is the composition of Mori flips, flops
  and inverse flips. 
\item $p\colon X \to S$ and $p^\prime\colon X^\prime \to S^\prime$ are Mfs.
\end{enumerate}

The goal of this section is to prove theorem~\ref{thm:3} below. In the
final short section~\ref{sec:mainthm} we show that
theorem~\ref{thm:2} and theorem~\ref{thm:3} imply theorem~\ref{thm:1}. The proof of
theorem~\ref{thm:3} is a variation on the proof of
\cite{MR3019454}.

\begin{dfn}
  \label{dfn:6}
  A birational map $f\colon X \dasharrow Y$ is \emph{contracting} if
  $f^{-1}$ contracts no divisors. 
\end{dfn}

\begin{rem}
  \label{rem:8}
  If a birational map $f\colon X \dasharrow Y$ is contracting, then it
  makes sense to pullback $\QQ$-Cartier ($\RR$-Cartier) divisors from
  $Y$ to $X$. Choose a normal variety $W$ and a factorisation:
\[
\xymatrix{ & W\ar[dl]_p \ar[dr]^q& \\
X\ar@{-->}[rr]^f & & Y}
\]
with $p$ and $q$ proper birational morphisms. If $D$ is a
$\QQ$-Cartier ($\RR$-Cartier) $\QQ$-divisor ($\RR$-divisor) on $Y$ the
pullback $f^\star(D)$ is defined as:
\[
f^\star (D)=p_\star q^\star (D)
\]
 (this is easily seen to be independent of the factorisation).
\end{rem}

\begin{thm}
  \label{thm:3}
  The birational map $\varphi\colon X \dasharrow X^\prime$ is a
  composition of links $\varphi_i \colon X_i/S_i \dasharrow
  X_{i+1}/S_{i+1}$ of the Sarkisov program where all the maps
  $Y\dasharrow X_i$ are contracting.
\end{thm}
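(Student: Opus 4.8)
The plan is to run a Sarkisov-style induction on a suitable complexity/degree invariant attached to the map $\varphi$, while carrying along the extra data of the morphism $g\colon Y\to X$ and the constraint that all intermediate maps $Y\dasharrow X_i$ be contracting. First I would fix an ample divisor (or a general enough mobile linear system) $\mathcal{H}'$ on $X'$, pull it back under $\varphi^{-1}$ to a mobile system $\mathcal{H}$ on $X$, and introduce the Sarkisov degree of $\varphi$ measured by the data $(\mu, \lambda, e)$: the quasi-effective threshold $\mu$ such that $K_X+\frac{1}{\mu}\mathcal{H}$ sits on the boundary of the effective/movable cone over $S$, the canonical threshold $\lambda$ (or log canonical threshold of the pair $(X,\frac{1}{\mu}\mathcal{H})$), and the number of crepant divisors $e$ computing these thresholds. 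The key point, following \cite{MR3019454}, is that each Sarkisov link strictly decreases this degree in the lexicographic order, so the induction terminates; what is new here is that I must choose each link so that it is compatible with the blow-up $g$, i.e.\ so that the composite $Y\dasharrow X_{i+1}$ remains contracting.

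The heart of the argument is the case analysis producing the next link. Running the $(K_X+\frac{1}{\mu}\mathcal{H})$-MMP (with scaling) over $S$, or over a lower-dimensional base in the type IV case, there are the usual four possibilities, and in each one I extract a divisorial extraction $Z\to X$ (of the divisor $E$ computing the maximal multiplicity, constructed by a relative MMP as in Lemma~\ref{lem:3} and the proof of Theorem~\ref{thm:2}) followed by a sequence of flips/flops/inverse flips and possibly a divisorial contraction on the other side, yielding a link $\varphi_i\colon X_i/S_i\dasharrow X_{i+1}/S_{i+1}$ of one of the types I--IV. The crucial verification is that $Y$ dominates $Z$ contractingly: since $Y\to X_i$ is contracting and $Z\to X_i$ is a divisorial extraction of a divisor $E$ with centre contained in the base locus, $E$ is a divisor on $Y$ (its discrepancy being controlled as in Remark~\ref{rem:1}), hence $Y\dasharrow Z$ extracts nothing that $g$ did not already extract; then the flips/flops only make the birational map to $Y$ worse on the side we are contracting, never extracting new divisors, so contractingness is preserved. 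One also needs that after the link the pair remains of the type required to continue the induction, which is where the balancing between terminal singularities of varieties and lc singularities of pairs, already set up in Section~\ref{sec:mmp}, is used.

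I expect the main obstacle to be precisely this bookkeeping of the contracting condition through the four link types simultaneously with the decrease of the Sarkisov degree: one must show that the divisor $E$ realizing the relevant threshold always has a centre that is nonempty and small (or divisorial, in the type III/IV situations) on $X$ \emph{and} is already a divisor on, or is extractable compatibly with, $Y$. In the type II and type IV cases this requires checking that the divisorial contraction $X'\to Z'$ on the far side, or the ambiguity in choosing which extremal ray to contract, does not force $Y\dasharrow X'$ to extract a divisor; resolving this may require choosing $\mathcal{H}'$ (equivalently the polarization downstairs) generically, and invoking finiteness of models \cite{MR2601039} to guarantee that the relevant relative MMPs terminate at the models we want. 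A secondary technical point is the well-definedness and additivity of the Sarkisov degree under the pullback conventions of Remark~\ref{rem:8}, and checking that the degenerate ``untwisting'' steps (where $\mu$ or $\lambda$ stays the same but $e$ drops) still respect contractingness; these I would handle by the same discrepancy comparison as above, citing Remark~\ref{rem:1} and Lemma~\ref{lem:3}(4).
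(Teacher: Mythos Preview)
Your proposal follows the classical Corti approach via the Sarkisov degree $(\mu,\lambda,e)$, whereas the paper follows the Hacon--McKernan route via the geography of ample models. In the paper one never extracts a maximal singularity from $X$: instead one chooses a small auxiliary divisor $G_Y$ on $Y$, runs $(K_{Y_i}+G_{Y_i})$-MMPs with scaling starting \emph{from} each $Y_i$ in the given chain $Y=Y_0\dasharrow\cdots\dasharrow Y_N=Y^\prime$, and then for consecutive end-products applies a $3$-dimensional cone argument in $\Div_\RR(Y_{i-1})$ (Lemma~\ref{lem:6}) to produce the links. The ``under $Y$'' condition is then automatic, because every model $X_i$ that appears is by construction the output of an MMP starting from some $Y_j$, and $Y\dasharrow Y_j$ is an isomorphism in codimension one.

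Your approach has a genuine gap at exactly the point you flag as ``the crucial verification''. You assert that the divisor $E$ realising the canonical threshold of $\bigl(X_i,\tfrac{1}{\mu}\mathcal{H}_i\bigr)$ is already a divisor on $Y$, invoking Remark~\ref{rem:1}. But Remark~\ref{rem:1} concerns integrality of discrepancies for an integral Cartier divisor $K_X+D$ on a CY pair, and Theorem~\ref{thm:3} is stated and proved purely in the setup of \S\ref{sec:basic-setup}, with no CY pair in sight; the mobile system $\tfrac{1}{\mu}\mathcal{H}$ has nothing to do with any boundary $D$, and there is no mechanism forcing a maximal singularity of $\mathcal{H}$ to be one of the finitely many $g$-exceptional valuations. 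Concretely, $g\colon Y\to X_i$ extracts a fixed finite set of divisors determined once and for all by the basic setup, while $E$ is determined by the base locus of a linear system pulled back from $X^\prime$; these are unrelated, and in general $Y\dasharrow Z$ will \emph{not} be contracting. (Your appeal to Lemma~\ref{lem:3} does not help either: that lemma extracts divisors with $a(E,K_X+D)\in\{-1,0\}$ for a given lc CY pair, not divisors computing thresholds of $\mathcal{H}$.) Separately, the invariant $(\mu,\lambda,e)$ is not what \cite{MR3019454} actually uses, and its termination in dimension $\geq 4$ is not established, so even modulo the contracting issue your induction would not close. The fix is precisely the paper's idea: produce every $X_i$ as an MMP output \emph{from} $Y_j$, rather than trying to lift extractions \emph{to} $Y$.
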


\begin{term}
  We say that the link $\varphi_i \colon X_i/S_i \dasharrow
  X_{i+1}/S_{i+1}$ is \emph{under $Y$} if the maps $Y\dasharrow X_i$
  and $Y\dasharrow X_{i+1}$ are contracting.
\end{term}

\subsection{Finitely generated divisorial rings}
\label{sec:finit-gener-divis}

\subsubsection{General theory}
\label{sec:general-theory}

\begin{dfn}
  \label{dfn:9}
  Let $f\colon X \dasharrow Y$ be a a contracting birational map.
  
  Let $D_X$ be an $\RR$-divisor. We say that $f$ is
  \emph{$D_X$-nonpositive} (\emph{$D_X$-negative}) if $D_Y=f_\star D_X$ is
  $\RR$-Cartier and
\[
D_X=f^\star(D_Y)+\sum_{\text{E \, \text{$f$-exceptional}}} a_E E
\]
  where all $a_E\geq 0$ (all $a_E>0$). 
\end{dfn}

Note the special case $D_X=K_X$ in the definition just given. 

\begin{dfn}
  \label{dfn:8}
  Let $X/Z$ be a normal variety, proper over $Z$, and $D$ an
  $\RR$-divisor on $X$.
  \begin{enumerate}[(1)]
  \item A \emph{semiample model} of $D$ is a $D$-nonpositive
    contracting birational map $\varphi \colon X \dasharrow Y$ to a
    normal variety $Y/Z$ proper over $Z$ such that $D_Y= \varphi_\star D$
    is semiample over $Z$;
  \item An \emph{ample model} of $D$ is a rational map $h\colon X
    \dasharrow W$ to a normal variety $W/Z$ projective over $Z$
    together with an ample $\RR$-Cartier divisor $A$, such that there
    is a factorisation $h=g \circ f$:
\[
X\overset{f}{\dasharrow} Y \overset{g}{\to} W
\]
where $f\colon X \dasharrow Y$ is a semiample model of $D$, $g\colon Y
\to W$ is a morphism, and $D_Y=g^\star (A)$.
  \end{enumerate}
\end{dfn}

\begin{rem}
  \label{rem:5}
  Let $X/Z$ be a normal variety proper over $Z$ and $D$ an
  $\RR$-divisor on $X$. 
  \begin{enumerate}[(1)]
  \item Suppose that $W/Z$ is normal, $A$ an ample $\RR$-divisor on
    $W$, and $h\colon X\dasharrow W$ an ample model of $D$. If
    $f\colon X \dasharrow Y$ is a semiample model of $D$, then the induced
    rational map $g\colon Y\dasharrow W$ is a morphism and
    $D_Y=g^\star A$.
  \item All ample models of $D$ are isomorphic over $Z$. 
  \end{enumerate}
\end{rem}

We refer to\cite[\S~3]{MoriProcs} for basic terminology on divisorial
rings. 

\begin{thm}\label{thm:decomposition} \cite[Theorem~4.2]{MoriProcs}
  Let $X$ be a projective $\QQ$-factorial variety, and $\mathcal{C}
  \subseteq \Div_\RR(X)$ a rational polyhedral cone containing a big
  divisor\footnote{We need to assume that $\mathcal{C}$ contains a big
    divisor so we can say: if $D\in \mathcal{C}$ is pseudo effective
    then $D$ is effective.} such that the ring $\mathfrak
  R=R(X,\mathcal{C})$ is finitely ge\-ne\-ra\-ted. Then there exists a
  finite rational polyhedral fan $\Sigma$ and a decomposition:
 \[
\Supp\mathfrak R= |\Sigma|= \coprod_{\sigma \in \Sigma} \sigma 
 \]
  such that:
\begin{enumerate}[(1)]
\item For all $\sigma \in \Sigma$ there exists a normal projective
  variety $X_\sigma$ and
  a rational map $\vf_\sigma \colon X\dasharrow X_\sigma$ such that
  for all $D\in\sigma$, $\vf_\sigma$ is the ample model of $D$.
  If $\sigma$ contains a big divisor, then for all
  $D\in\overline{\sigma}$, $\vf_\sigma$ is a
  semiample model of $D$.
\item For all $\tau \subseteq\overline{\sigma}$ there exists
  a morphism $\vf_{\sigma \tau}\colon X_\sigma \longrightarrow X_\tau$
  such that the diagram 
\[
\xymatrix{ 
X \ar@{-->}[rr]^{\vf_\sigma} \ar@{-->}[dr]_{\vf_\tau} & \quad & X_\sigma
  \ar[dl]^{\vf_{\sigma \tau}}\\
\quad & X_\tau & \quad
}\]
commutes.
\end{enumerate}
\end{thm}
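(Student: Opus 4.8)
The plan is to deduce the decomposition from the Mori chamber structure of a nef cone on an auxiliary smooth model, following the ``geography of models'' circle of ideas (cf.\ Ein--Lazarsfeld--Musta\c{t}\u{a}--Nakamaye--Popa and \cite{MR2601039}). Throughout I would freely use the uniqueness of ample models (Remark~\ref{rem:5}(2)) and the notions of $D$-nonpositivity and semiample/ample models from Definitions~\ref{dfn:8} and~\ref{dfn:9}. The first, easy, step is that $\Supp\mathfrak R$ is a rational polyhedral cone: this is the general fact that the support of a finitely generated $\mathcal C$-graded ring is the rational polyhedral subcone of $\mathcal C$ spanned by the degrees of a finite homogeneous generating set. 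Since $\mathcal C$ contains a big divisor and a pseudoeffective divisor in $\mathcal C$ is effective, $\Supp\mathfrak R$ still contains a big divisor, so after replacing $\mathcal C$ by $\Supp\mathfrak R$ we may assume $\mathcal C=\Supp\mathfrak R$.

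The crux is the second step: resolving the Zariski decompositions simultaneously. Using finite generation of $\mathfrak R$ together with resolution of singularities, I would produce one projective birational morphism $p\colon\widetilde X\to X$ with $\widetilde X$ smooth, inducing an isomorphism $R(\widetilde X,p^\star\mathcal C)\cong\mathfrak R$, together with a rational polyhedral subdivision of $p^\star\mathcal C$ on each cone $\sigma$ of which $p^\star D$ admits a Zariski-type decomposition $p^\star D=P_D+N_D$ where $P_D$ is base point free, $N_D\geq 0$, both $P_D$ and $N_D$ depend linearly on $D$ on $\sigma$, $\Supp N_D$ is constant on $\sigma$, and $R(\widetilde X,p^\star D)=R(\widetilde X,P_D)$. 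This is the standard consequence of finite generation that the positive part of the Zariski decomposition of a divisor with finitely generated section ring is semiample and varies piecewise linearly; essentially all the real work sits here, and everything afterwards is bookkeeping.

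Granting this, the third step transports the Mori chamber decomposition. On $\widetilde X$ a base point free class $P$ determines its ample model $g_P\colon\widetilde X\to W_P=\Proj R(\widetilde X,P)$ with $P=g_P^\star A_P$ for some ample $A_P$; grouping base point free classes by their ample model gives a rational polyhedral fan refining the nef chamber structure, and an inclusion of faces induces a morphism $W_P\to W_{P'}$. Pulling this fan back through the piecewise linear map $D\mapsto P_D$ from the second step, and then pushing forward along the fixed morphism $p$ (which identifies the two rings), produces the fan $\Sigma$ on $\Supp\mathfrak R$. For $\sigma\in\Sigma$ with associated data $P=P_D$ and $W=W_P$ ($D\in\sigma$), I set $X_\sigma=W$ and let $\varphi_\sigma\colon X\dasharrow X_\sigma$ be the composite $X\xleftarrow{p}\widetilde X\xrightarrow{g_P}W$. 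Since $N_D$ is $p$-exceptional or contracted by $g_P$ and $P_D$ is nef, $\varphi_\sigma$ is $D$-nonpositive, and factoring $g_P$ through the semiample contraction of $P_D$ exhibits $\varphi_\sigma$ as a semiample model of $D$ followed by a morphism to $W$ with $D_W$ ample; by uniqueness of ample models, $\varphi_\sigma$ is the ample model of every $D\in\sigma$. For $\tau\subseteq\overline\sigma$ the class $P_{D_\tau}$ pulls back from $W_\tau$ to a nef class on $X_\sigma$, which yields the morphism $\varphi_{\sigma\tau}\colon X_\sigma\to X_\tau$, and commutativity of the triangle is immediate from the construction. Finally, if $\sigma$ contains a big divisor, then for $D\in\overline\sigma$ the class $(\varphi_\sigma)_\star D$ is nef on $X_\sigma$, and since the corresponding section ring on $X_\sigma$ is a graded piece of $\mathfrak R$, hence finitely generated, nef classes there are semiample, so $\varphi_\sigma$ is a semiample model of every $D\in\overline\sigma$.

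The main obstacle, as indicated, is the second step: extracting from the abstract finite generation of $\mathfrak R$ a single geometric model $\widetilde X$ carrying a piecewise linear family of Zariski decompositions with semiample positive parts. This is precisely the substantive input supplied by the ELMNP/\cite{MR2601039} machinery; modulo it, the chamber decomposition and the identification of the models and morphisms are a routine exercise in convex geometry combined with the uniqueness of ample models.
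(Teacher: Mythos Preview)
The paper does not prove this theorem: it is quoted verbatim from \cite[Theorem~4.2]{MoriProcs}, and the only proof-adjacent content in the paper is Remark~\ref{rem:4}(2), observing that part~(2) is immediate from part~(1) together with Remark~\ref{rem:5}(1). So there is no in-paper argument to compare your sketch against; your proposal is, in effect, a summary of the proof one finds in the cited reference, and in broad outline it is the right one: reduce to $\mathcal{C}=\Supp\mathfrak R$, pass to a single resolution $p\colon\widetilde X\to X$ on which every $p^\star D$ admits a piecewise-linear Zariski-type decomposition with semiample positive part, and read off the fan and the models from the resulting chamber structure.

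One inaccuracy worth flagging: in your third step you write ``$N_D$ is $p$-exceptional or contracted by $g_P$'' to justify $D$-nonpositivity of $\varphi_\sigma$. The components of $N_D$ need not be $p$-exceptional; what is true is that they lie in the stable base locus of $p^\star D$ and are therefore contracted by the morphism to the ample model. More to the point, the map $X\xleftarrow{p}\widetilde X\xrightarrow{g_P}W$ is not itself a factorisation of the form ``semiample model followed by morphism'' required by Definition~\ref{dfn:8}(2), because $p^{-1}$ is not contracting. One has to produce the intermediate semiample model $f\colon X\dasharrow Y$ directly (e.g.\ as the map to $\Proj$ of the section ring when $D$ is big, or via a further contraction of $\widetilde X$), and then check $D$-nonpositivity there. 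This is routine once step~2 is in hand, but your phrasing elides it. The deduction of part~(2) you give at the end is correct and matches exactly what the paper records in Remark~\ref{rem:4}(2).
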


\begin{rem}
  \label{rem:4}
  \begin{enumerate}[(1)]
 \item Under the assumptions of Theorem~\ref{thm:decomposition}, if a cone
  $\sigma \in \Sigma$ intersects the interior of
  $\Supp \mathfrak R$, then it consists of big divisors (this is
  because the big cone is the interior of the pseudo-effective cone). This holds in
  particular if $\sigma$ is of maximal dimension.
  \item Theorem~\ref{thm:decomposition}(2) follows
    immediately from part~(1) and remark~\ref{rem:5}(1).
  \end{enumerate}
\end{rem}

\begin{dfn}
  \label{dfn:7}
  Let $X$ be a projective $\QQ$-factorial variety, and
  $\mathcal{C} \subseteq \Div_\RR(X)$ a rational polyhedral cone
  containing a big divisor such that the ring $\mathfrak
  R=R(X,\mathcal{C})$ is finitely ge\-ne\-ra\-ted.  We say that
  $\mathcal{C}$ is \emph{generic} if:
  \begin{enumerate}[(1)]
  \item For all $\sigma \in \Sigma$ of maximal dimension,\footnote{that is,
      $\dim \sigma=\dim \Supp \mathfrak{R}$} $X_\sigma$ is
    $\QQ$-factorial.
  \item For all $\sigma \in \Sigma$, not necessarily of maximal
    dimension, and all $\tau \subset \overline{\sigma}$ of codimension
    one, the morphism $X_\sigma \to X_\tau$ has relative Picard rank
    $\rho({X_\sigma/X_\tau})\leq 1$.
  \end{enumerate}
\end{dfn}

\begin{nota}
  If $V$ is a $\RR$-vector space and $v_1, \dots, v_k \in V$, then we
  denote by
\[
\langle v_1,\dots, v_k \rangle = \sum_{i=1}^k \RR_{\geq 0} v_i
\]
 the convex cone in $V$ spanned by the $v_i$. 
\end{nota}

\begin{lem}
  \label{lem:2}
  Let $X$ be a projective $\QQ$-factorial variety, and
  $\mathcal{C} \subseteq \Div_\RR(X)$ a generic rational polyhedral cone
  containing a big divisor.

  Let $D_1, \dots, D_k\in \mathcal{C}$ such that the cone
  $\langle D_1, \dots, D_k\rangle$ contains a big divisor, and let
  $\ve >0$. There exist $D_1^\prime, \dots D^\prime_k \in \mathcal{C}$
  with $|\!|D_i-D_i^\prime|\!|<\ve$ such that the cone
  $\langle D_1^\prime, \dots D^\prime_k \rangle$ is generic.
\end{lem}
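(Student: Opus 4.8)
The plan is to perturb the $D_i$ one at a time, keeping control of genericity via the fan $\Sigma$ attached to $\mathcal{C}$ by Theorem~\ref{thm:decomposition}. First I would record the two obstructions to genericity for a subcone $\mathcal{D}=\langle D_1',\dots,D_k'\rangle\subseteq\mathcal{C}$: (1) its maximal cones $X_\sigma$ (which are the $X_\sigma$ for $\sigma\in\Sigma$ of maximal \emph{relative} dimension among those meeting the interior of $\mathcal{D}$) must be $\QQ$-factorial, and (2) for codimension-one faces $\tau\subset\overline\sigma$ inside $\mathcal{D}$ the relative Picard rank $\rho(X_\sigma/X_\tau)$ must be $\le 1$. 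The point is that both conditions are \emph{inherited} from $\mathcal{C}$ provided $\mathcal{D}$ is in \emph{general position} with respect to $\Sigma$: if $\overline{\mathcal{D}}$ meets each cone $\sigma\in\Sigma$ in a face of $\mathcal{D}$ of the expected dimension, then every chamber of $\mathcal{D}$ lies inside a single maximal $\sigma$, and every codimension-one wall of $\mathcal{D}$ lies inside a cone of $\Sigma$ of codimension at most one; so the models $X_\sigma$ that occur for $\mathcal{D}$ are among those for $\mathcal{C}$, hence $\QQ$-factorial, and the relevant morphisms $X_\sigma\to X_\tau$ are among the morphisms $X_{\sigma'}\to X_{\tau'}$ of $\Sigma$ with $\rho\le 1$, or are isomorphisms. (Here I use Remark~\ref{rem:4}(1) to know that the chambers of $\mathcal{D}$ meeting the interior of $\Supp\mathfrak R$ consist of big divisors, so that the ample models there are genuine birational models and the genericity conditions make sense.)

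Next I would reduce ``general position'' to an explicit open dense condition on the perturbed vectors. The cone $\mathcal{C}$ is cut out of $\Div_\RR(X)$ by finitely many rational linear inequalities, and $\Sigma$ refines it into finitely many rational polyhedral cones, each again defined by finitely many rational hyperplanes. Let $H_1,\dots,H_m$ be the finite list of all linear hyperplanes occurring as spans of facets of cones of $\Sigma$. The condition I want on $(D_1',\dots,D_k')$ is that the affine-linear subspaces spanned by subsets of the $D_i'$ meet each $H_j$ transversally (in particular, no $D_i'$ lies on any $H_j$ unless $D_i$ already does, and more generally each face of $\mathcal{D}$ has the expected dimension of intersection with each cone of $\Sigma$). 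This is the complement of finitely many proper algebraic subsets in the space of $k$-tuples $(D_1',\dots,D_k')$, hence open and dense; since each individual $D_i'$ can be chosen rational (the vertices of cones of $\Sigma$ are rational) and arbitrarily close to $D_i$, we can find such a tuple with $\|D_i-D_i'\|<\ve$. I would also need to keep $\langle D_1',\dots,D_k'\rangle$ containing a big divisor: since $\langle D_1,\dots,D_k\rangle$ contains a big divisor and bigness is an open condition, a small enough perturbation preserves this.

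The step I expect to be the main obstacle is the bookkeeping in the first paragraph: verifying carefully that a chamber $\sigma_0$ of the induced decomposition of $\mathcal{D}$ really does sit inside a unique maximal cone of $\Sigma$, and that the ample model $\vf_{\sigma_0}\colon X\dasharrow X_{\sigma_0}$ computed relative to $\mathcal{D}$ agrees with $\vf_\sigma\colon X\dasharrow X_\sigma$ for the ambient $\sigma\supseteq\sigma_0$ — this is essentially Theorem~\ref{thm:decomposition}(1), since ample models depend only on the divisor, but one must make sure the decomposition fan of $R(X,\mathcal{D})$ is the restriction of $\Sigma$, which is exactly where general position is used. Granting that, condition (1) of Definition~\ref{dfn:7} is immediate, and condition (2) follows because a codimension-one wall $\tau_0\subset\overline{\sigma_0}$ of $\mathcal{D}$ lies in some $\sigma'\in\Sigma$ with $\dim\sigma'\ge\dim\sigma-1$; if $\dim\sigma'=\dim\sigma$ then $\vf_{\sigma_0\tau_0}$ is an isomorphism, and if $\dim\sigma'=\dim\sigma-1$ then $X_{\sigma_0}\to X_{\tau_0}$ is the morphism $X_\sigma\to X_{\sigma'}$ of $\Sigma$, which has relative Picard rank $\le 1$ by genericity of $\mathcal{C}$. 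This proves $\langle D_1',\dots,D_k'\rangle$ is generic.
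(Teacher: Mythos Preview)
Your proposal is correct and follows exactly the same idea as the paper: perturb the $D_i$ so that every face $\langle D'_{i_1},\dots,D'_{i_c}\rangle$ meets every cone $\sigma\in\Sigma$ properly, which is an open dense condition, and then genericity of $\langle D_1',\dots,D_k'\rangle$ is inherited from that of $\mathcal{C}$. The paper's proof is a single sentence stating precisely this transversality condition; you have simply (and correctly) unpacked why proper intersection forces the induced fan on $\mathcal{D}$ to be the restriction of $\Sigma$ and hence to satisfy Definition~\ref{dfn:7}.
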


\begin{proof}
  Make sure that all cones $\langle D^\prime_{i_1}, \dots,
  D^\prime_{i_c}\rangle$, $i_1, \dots, i_c \in \{1, \dots, k\}$,
  intersect all cones $\sigma \in \Sigma$ properly.
\end{proof}

\begin{thm}
  \label{pro:1}
  Let $X$ be a projective $\QQ$-factorial variety, $\Delta_1, \dots,
  \Delta_r\geq 0$ big $\QQ$-divisors on $X$ such that all pairs
  $(X,\Delta_i)$ are klt, and let 
\[
\mathcal{C} =\langle K_X+\Delta_1, \dots, K_X+\Delta_r \rangle
\] 

Then $\mathfrak{R}=R(X,\mathcal{C})$ is finitely ge\-ne\-ra\-ted, and
if $\Supp \mathfrak{R}$ spans $N^1_\RR(X)$ as a vector space then
$\mathcal{C}$ is generic. \qed
\end{thm}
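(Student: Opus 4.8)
The plan is to deduce finite generation from the finite generation theorem for adjoint rings, and then to verify the two conditions of Definition~\ref{dfn:7} directly from the decomposition of Theorem~\ref{thm:decomposition} by a dimension count. Finite generation is immediate: $\mathfrak{R}=R(X,\mathcal{C})$ is the adjoint ring of the klt pairs $(X,\Delta_i)$ with big boundaries $\Delta_i$, so it is finitely generated by \cite[Corollary~1.1.9]{MR2601039} (see also \cite[\S4]{MoriProcs}); this part uses no spanning hypothesis. It will be convenient to record the convexity fact that, since $a(E,K_X+\sum_i\mu_i\Delta_i)=\sum_i\mu_i\,a(E,K_X+\Delta_i)$ whenever $\sum_i\mu_i=1$, any convex combination of the $(X,\Delta_i)$ is again klt, and any positive combination of the big effective $\Delta_i$ is big and effective; hence a big divisor of $\mathcal{C}$ is, up to rescaling, of the form $K_X+\Delta$ with $(X,\Delta)$ klt and $\Delta\geq0$ big.

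Now assume $\Supp\mathfrak{R}$ spans $N^1_\RR(X)$. Then $\Supp\mathfrak{R}$ is full-dimensional, its interior consists of big divisors by Remark~\ref{rem:4}(1), and in particular $\mathcal{C}$ contains a big divisor, so Theorem~\ref{thm:decomposition} applies and produces the fan $\Sigma$, the models $\vf_\sigma\colon X\dashrightarrow X_\sigma$ and the morphisms $\vf_{\sigma\tau}\colon X_\sigma\to X_\tau$; the maximal cells are exactly those of dimension $\dim N^1_\RR(X)$, with relative interiors open in $N^1_\RR(X)$ and made of big divisors. For any cell $\sigma$, $\vf_\sigma$ is contracting, so $\RR$-Cartier divisors can be pushed forward and pulled back along it (Remark~\ref{rem:8}); pushforward of Weil divisors gives a surjection $\Cl_\RR(X)\twoheadrightarrow\Cl_\RR(X_\sigma)$, and for $D$ in the relative interior of $\sigma$ the class $(\vf_\sigma)_\star D$ is ample, hence $\RR$-Cartier, on $X_\sigma$. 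For condition (1), take $\sigma$ maximal: then the relative interior of $\sigma$ is a full-dimensional open convex cone in $N^1_\RR(X)$, so, working modulo numerical equivalence as in the rest of the paper, its image under $(\vf_\sigma)_\star$ is an open convex cone $U\subseteq\Cl_\RR(X_\sigma)$ consisting of $\RR$-Cartier classes; since an open convex cone spans, $U-U=\Cl_\RR(X_\sigma)$, so every class on $X_\sigma$ is a difference of $\RR$-Cartier classes, hence $\RR$-Cartier, i.e.\ $X_\sigma$ is $\QQ$-factorial. (Equivalently, rescale some $D$ in the relative interior to $K_X+\Delta$ with $(X,\Delta)$ klt and $\Delta$ big, so $X_\sigma$ is the log canonical model of $(X,\Delta)$; by \cite{MR2601039} it is dominated by a $\QQ$-factorial minimal model $X'\to X_\sigma$, and maximality of $\dim\sigma$ forces $\rho(X')=\rho(X_\sigma)$, hence $X'\to X_\sigma$ is an isomorphism.)

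For condition (2), fix a cell $\sigma$, a codimension-one face $\tau\subset\overline{\sigma}$, and big divisors $D\in\operatorname{relint}\sigma$, $D_0\in\operatorname{relint}\tau$. For $t\in(0,1]$ the divisor $(1-t)D_0+tD$ lies in $\operatorname{relint}\sigma$, so $(1-t)(\vf_\sigma)_\star D_0+t(\vf_\sigma)_\star D$ is ample on $X_\sigma$; letting $t\to 0$, $(\vf_\sigma)_\star D_0$ is nef, and since $\sigma$ contains a big divisor $\vf_\sigma$ is a semiample model of $D_0$ with ample model $\vf_{\sigma\tau}$, whence $(\vf_\sigma)_\star D_0=\vf_{\sigma\tau}^\star A_\tau$ with $A_\tau$ ample on $X_\tau$ (Remark~\ref{rem:5}(1)); in particular $(\vf_\sigma)_\star D_0$ lies on the face of $\Nef(X_\sigma)$ cut out by the $\vf_{\sigma\tau}$-contracted curves, of codimension $\rho(X_\sigma/X_\tau)$. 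On the other hand, pushforward along $\vf_\sigma$ defines a linear map $\psi\colon\operatorname{span}\sigma\to N^1_\RR(X_\sigma)$ which is surjective (its image contains the open cone $(\vf_\sigma)_\star(\operatorname{relint}\sigma)\subseteq\Amp(X_\sigma)$) and which carries the hyperplane $\operatorname{span}\tau\subset\operatorname{span}\sigma$ into $\vf_{\sigma\tau}^\star N^1_\RR(X_\tau)$. A rank-nullity count then gives
\[
\rho(X_\sigma/X_\tau)=\dim N^1_\RR(X_\sigma)-\dim\vf_{\sigma\tau}^\star N^1_\RR(X_\tau)\le\dim\operatorname{span}\sigma-\dim\operatorname{span}\tau=1.
\]

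I expect condition (1) to be the main obstacle: one has to rule out that a maximal cell produces a small, non-$\QQ$-factorial contraction — log canonical models of klt pairs need not be $\QQ$-factorial in general — and this is precisely where the hypothesis that $\Supp\mathfrak{R}$ spans $N^1_\RR(X)$ is used: it is what makes $\operatorname{span}\sigma=N^1_\RR(X)$ for a maximal $\sigma$, and hence forces the $\vf_\sigma$-exceptional divisors to account for the entire discrepancy between $\Cl_\RR(X_\sigma)$ and $\Pic_\RR(X_\sigma)$.
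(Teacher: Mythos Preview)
The paper itself gives no proof here: immediately after the statement it refers the reader to \cite[Theorem~4.5]{MoriProcs}, noting only that finite generation is the main theorem of \cite{MR2601039}. Your outline---finite generation from BCHM, then verify Definition~\ref{dfn:7} by analysing the fan of Theorem~\ref{thm:decomposition}---is exactly the strategy of that reference, so you are on the right track. Two steps, however, need more care.

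For condition~(1), your first argument conflates $N^1_\RR$ with $\Cl_\RR$. The hypothesis says $\Supp\mathfrak R$ spans $N^1_\RR(X)$, but the surjection $\Cl_\RR(X)=\Pic_\RR(X)\to N^1_\RR(X)$ may have nontrivial kernel, so a maximal cell $\sigma$ need not span $\Cl_\RR(X)$, the cone $U$ need not be open in $\Cl_\RR(X_\sigma)$, and $U-U=\Cl_\RR(X_\sigma)$ does not follow. Your parenthetical second argument, via a $\QQ$-factorial minimal model $X'\to X_\sigma$, is the correct route, but the sentence ``maximality of $\dim\sigma$ forces $\rho(X')=\rho(X_\sigma)$'' is precisely the point at issue and needs an argument: one uses that the pushforward $N^1_\RR(X)\to N^1_\RR(X')$ is surjective (kernel spanned by the exceptional divisors of the MMP), so a full-dimensional neighbourhood of $D$ in $\Supp\mathfrak R$ maps onto a neighbourhood of $D_{X'}$ in $N^1_\RR(X')$; since $X'$ is still a weak log canonical model for every such nearby $D'$, the class $D_{X'}$ lies in the interior of $\Nef(X')$, hence is ample, and $X'\to X_\sigma$ is an isomorphism.

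For condition~(2), your surjectivity claim for $\psi\colon\operatorname{span}\sigma\to N^1_\RR(X_\sigma)$ is only justified when $\sigma$ is maximal: for a non-maximal $\sigma$ the set $(\vf_\sigma)_\star(\operatorname{relint}\sigma)\subset\Amp(X_\sigma)$ is open in $\psi(\operatorname{span}\sigma)$ but not a priori in $N^1_\RR(X_\sigma)$, and then the rank--nullity inequality yields nothing. One fixes this by choosing a maximal cell $\sigma'$ with $\tau\subset\overline\sigma\subset\overline{\sigma'}$ and carrying out the dimension count on the $\QQ$-factorial model $X_{\sigma'}$ instead, using the factorisation $X_{\sigma'}\to X_\sigma\to X_\tau$.
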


For the proof see for example \cite[Theorem~4.5]{MoriProcs}. Note that
the assumptions readily imply that $\Supp \mathfrak{R}$ contains big
divisors. The finite generation of $\mathfrak{R}$ is of course the big
theorem of \cite{MR2601039}.

\begin{setup}
  \label{setup:1}
In what follows we work with a pair $(X, G_X)$ where $X$ is
$\QQ$-factorial and:
\begin{enumerate}[(i)]
\item $G_X$ is a $\QQ$-linear combination of irreducible
  mobile\footnote{A $\QQ$-divisor $M$ is mobile if for some integer
    $n>0$ such that $nM$ is integral, the linear system $|nM|$ has no
    fixed (divisorial) part.} divisors;
\item $(X,G_X)$ is terminal;
\item $K_X+G_X$ is not pseudoeffective.
\end{enumerate}
\end{setup}
Assumption~(i) implies that when running the MMP for $\Kc{X}{G}$ no
component of $G_X$ is ever contracted, so that $(X,G_X)$ remains terminal
throughout the MMP. Assumption~(iii) means that the MMP terminates
with a Mf.

\begin{cor}
  \label{cor:1}
  Let $X$ be a projective $\QQ$-factorial variety, $G_X$ as in
  setup~\ref{setup:1}, $\Delta_1, \dots, \Delta_r\geq 0$ big
  $\QQ$-divisors on $X$ such that all pairs $(X,G_X+\Delta_i)$ are
  klt.

 Then for all $\varepsilon >0$ there are ample $\QQ$-divisors $H_1, \dots,
  H_r\geq 0$ with $|\!|H_i|\!|<\varepsilon$ such that
\[
\mathcal{C}^\prime =\langle \Kc{X}{G}, \Kc{X}{G}+\Delta_1+H_1, \dots,
\Kc{X}{G}+\Delta_r+H_r \rangle
\] 
is generic. 
% If $K_X$ is not pseudoeffective and
% $\Delta_1=0$ then we may do this with $\Delta^\prime_1=0$.\qed
\end{cor}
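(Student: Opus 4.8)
The plan is to combine Theorem~\ref{pro:1} with the perturbation Lemma~\ref{lem:2}, after first arranging that the relevant support spans $N^1_\RR(X)$. First I would set $\Delta_0 = 0$ and consider, for a parameter $\varepsilon>0$, the pairs $(X, G_X + \Delta_i + H_i)$ where the $H_i \geq 0$ are ample $\QQ$-divisors of norm $<\varepsilon$ to be chosen; since ampleness is an open condition and $(X, G_X+\Delta_i)$ is klt, for $\varepsilon$ small enough all pairs $(X, G_X+\Delta_i+H_i)$ remain klt, and $\Delta_i + H_i$ is still big (indeed now ample plus effective). Here $G_X$ plays an innocuous role: writing $\Delta_i' = G_X + \Delta_i + H_i$, the divisors $\Delta_i' \geq 0$ are big $\QQ$-divisors with $(X,\Delta_i')$ klt, so Theorem~\ref{pro:1} applies to the cone $\langle K_X+\Delta_0', \dots, K_X+\Delta_r'\rangle$ provided we also throw in $\Delta_0' = G_X$ — but $(X,G_X)$ is only terminal, not necessarily with $G_X$ big. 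I would handle this by instead taking as the ``ambient'' cone of Theorem~\ref{pro:1} the cone generated by $K_X + G_X + \Delta_i + H_i$ for $i=1,\dots,r$ \emph{together with} auxiliary divisors $K_X + G_X + A$ for finitely many ample $A$ chosen so that the resulting support spans $N^1_\RR(X)$; each such $(X, G_X+A)$ is klt (terminal even) with $G_X+A$ big, so Theorem~\ref{pro:1} gives that this larger cone $\mathcal{C}$ is generic and $R(X,\mathcal{C})$ finitely generated.

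The second step is to extract $\mathcal{C}'$ from the generic cone $\mathcal{C}$. The divisors $D_0 = K_X+G_X$, $D_i = K_X+G_X+\Delta_i+H_i$ all lie in $\mathcal{C}$ (taking the auxiliary $A$'s large enough in the appropriate directions, or simply noting $K_X+G_X$ lies on a face; one may need to enlarge $\mathcal{C}$ by also adjoining $K_X+G_X$ itself as a generator, which is harmless since finite generation of the divisorial ring on a rational polyhedral cone is preserved under adjoining finitely many extra rational generators so long as the cone still contains a big divisor — and $\mathcal{C}$ does, since the $\Delta_i+H_i$ are big). Now $\langle D_0, D_1, \dots, D_r\rangle \subseteq \mathcal{C}$ contains a big divisor (e.g. $D_1$). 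Apply Lemma~\ref{lem:2} to the points $D_0, \dots, D_r$ inside the generic cone $\mathcal{C}$: for any $\varepsilon'>0$ there are $D_i'' \in \mathcal{C}$ with $\|D_i - D_i''\|<\varepsilon'$ such that $\langle D_0'', \dots, D_r''\rangle$ is generic. It remains to massage the perturbed generators back into the required shape $K_X + G_X$ and $K_X+G_X+\Delta_i + H_i$: write $D_i'' = D_i + E_i$ with $\|E_i\|<\varepsilon'$, and absorb $E_0$ by perturbing nothing (one wants $D_0'' = K_X+G_X$ exactly) — so instead I would apply a version of Lemma~\ref{lem:2} that fixes the ray through $D_0$, perturbing only $D_1,\dots,D_r$; this is the same ``intersect all subcones properly with $\Sigma$'' argument, now with one vertex held fixed. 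Then set $H_i' = H_i + (\text{the component of } E_i)$, chosen so that $D_i'' = K_X + G_X + \Delta_i + H_i'$; shrinking $\varepsilon'$ guarantees $H_i'$ is still ample (ampleness open) and $\|H_i'\| < \varepsilon$, completing the proof.

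The main obstacle I anticipate is the bookkeeping in the last step: Lemma~\ref{lem:2} as stated perturbs \emph{all} the given divisors, whereas the conclusion of Corollary~\ref{cor:1} demands that the generator $K_X+G_X$ appear \emph{on the nose} and that the perturbations of the other generators take the specific form of adding a small ample divisor to $\Delta_i$. So the real content is to check (i) that Lemma~\ref{lem:2} can be run keeping one vertex of the cone fixed — which follows because holding $D_0$ fixed and moving $D_1, \dots, D_r$ in a dense open set still lets one achieve proper intersection of every subcone with the fan $\Sigma$ of Theorem~\ref{thm:decomposition}, the fixed vertex contributing only finitely many extra subcones to worry about — and (ii) that the perturbation vectors $E_i$ can be taken \emph{effective and ample}, not merely small; this is where one uses that $\Delta_i + H_i$ is already big, so a small perturbation can be arranged within the ample cone translated by $\Delta_i$. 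The spanning condition ``$\Supp \mathfrak R$ spans $N^1_\RR(X)$'' needed to invoke genericity in Theorem~\ref{pro:1} is the reason for adjoining the auxiliary ample divisors $K_X+G_X+A$, and one should remark that these play no role in $\mathcal{C}'$ itself — they only ensure the big ambient ring is finitely generated and its fan decomposition exists, after which the sub-cone $\mathcal{C}'$ inherits genericity via Lemma~\ref{lem:2}.
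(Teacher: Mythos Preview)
Your approach is the same as the paper's: enlarge to an ambient cone by adjoining enough ample divisors so that $\Supp\mathfrak{R}$ spans $N^1_\RR(X)$, invoke Theorem~\ref{pro:1} to get genericity of the big cone, then use Lemma~\ref{lem:2} to perturb the $\Delta_i$ inside it.

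However, you miss the one-line observation that dissolves your ``main obstacle.'' By setup~\ref{setup:1}(iii), $K_X+G_X$ is not pseudoeffective, hence $K_X+G_X \notin \Supp\mathfrak{R}(X,\mathcal{C})$. The fan $\Sigma$ decomposes $\Supp\mathfrak{R}$, so the vertex $D_0 = K_X+G_X$ lies outside every cone of $\Sigma$ and imposes no constraint whatsoever on proper intersection; there is simply no need to perturb it. Your proposed workaround---a density argument with one vertex held fixed---would in fact require exactly this observation to succeed: without it, a fixed vertex sitting inside some low-dimensional cone of $\Sigma$ could force every subcone through it to meet that cone improperly, and no amount of moving the other $D_i$ would help. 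Once you note $K_X+G_X \notin \Supp\mathfrak{R}$, your entire ``second step'' and both anticipated obstacles collapse to a sentence, which is precisely the paper's proof.
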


\begin{proof}
  Add enough ample divisors to span $N^1$ and then use
  lemma~\ref{lem:2} to perturb $\Delta_1, \dots, \Delta_r$ inside a
  bigger cone. Since $\Kc{X}{G}\not \in \Effbar X$, then
  $\Kc{X}{G}\not \in \Supp \mathfrak{R} (X, \mathcal{C})$, and hence
  there is no need to perturb $G_X$. 
\end{proof}

\subsubsection{Special case: $2$-dimensional cones}
\label{sec:special-case:-2}

Suppose that $A$ is a big $\QQ$-divisor on $X$ such that
\begin{enumerate}[(i)]
\item the pair $(X,G_X+ A)$ is klt;
\item $\Kc{X}{G}+A$ is ample on $X$;
\item the cone $\mathcal{C}=\langle \Kc{X}{G}, \Kc{X}{G}+A\rangle$ is
  generic.
\end{enumerate}

Then, the decomposition of $\Supp \mathfrak{R}(X, \mathcal{C})$ given
by theorem~\ref{thm:decomposition} corresponds to running a MMP for
$\Kc{X}{G}$ with scaling by $A$. This MMP exist by
\cite[Corollary~1.4.2]{MR2601039}. In more detail, let
\[1=t_0>t_1>\cdots>t_{N+1}>0
\]
be rational numbers such that $\Supp
\mathfrak{R}(X,\mathcal{C})=\langle \Kc{X}{G}+A,
\Kc{X}{G}+t_{N+1}A\rangle$ and the maximal cones of the decomposition
correspond to the intervals $(t_i, t_{i+1})$. For all $t\in
(t_i,t_{i+1})$ $\Kc{X}{G}+tA$ is ample on $X_i=\Proj R(X,
\Kc{X}{G}+tA)$. Then
\[X=X_0\dasharrow X_1 \dasharrow \cdots \dasharrow X_i\dasharrow
X_{i+1}\dasharrow \cdots \dasharrow X_N\]
is a minimal model program for $\Kc{X}{G}$ with scaling by $A$, that is: 
\begin{enumerate}[(1)]
\item \[t_{i+1}=\inf \{\tau \in \RR \mid \Kc{X_i}{G} +\tau A_i \;
  \text{is nef} \}\] where $A_i$ denotes the push forward of $A$, and
  $X_i\dasharrow X_{i+1}$ is the divisorial contraction or flip of an
  extremal ray $R_i\subset \NEbar (X_i)$ with
  \[(\Kc{X_i}{G} +t_{i+1}A_i)\cdot R_i=0\quad
\text{and} \quad (\Kc{X_i}{G})\cdot R_i<0\]
and
\item $t_{N+1}=\inf\{\lambda \mid \Kc{X}{G}+\lambda A\; \text{is effective}
  \}$\footnote{i.e., $\Kc{X}{G}+\lambda A$ is $\QQ$-linearly equivalent
    to an effective divisor.} and 
\[X_N\to\Proj R(X, \Kc{X}{G}+t_{N+1}A)\]
 is a Mf. Moreover:
\item Genericity means that at each step there is a
unique extremal ray $R_i\subset \NEbar (X_i)$ with
$(\Kc{X_i}{G}+t_{i+1}A_i)\cdot R_i=0$ and $(\Kc{X_i}{G})\cdot R_i<0$.
\item The following follows immediately from genericity. If
  $0<\varepsilon<\!<1$ is small enough then for all ample
  $\QQ$-divisors $H$ with $|\!|H|\!|<\varepsilon$, $\Kc{X}{G}+A+H$ is
  ample on $X$, the cone $\mathcal{C}^\prime=\langle \Kc{X}{G},
  \Kc{X}{G}+A^\prime\rangle$ is still generic, and the MMP for
  $\Kc{X}{G}$ with scaling by $A+H$ is identical to the MMP for
  $\Kc{X}{G}$ with scaling by $A$, in the sense that the sequence of
  steps and end product are identical.
\end{enumerate}

\subsubsection{Special case: $3$-dimensional cones}
\label{sec:special-case:-3}

In this subsection we prove the following special case of
theorem~\ref{thm:3}:

\begin{lem}
  \label{lem:6}
  Suppose that $(Y,G_Y)$ is as in setup~\ref{setup:1}, and that
  $A$, $A^\prime$ are big $\QQ$-divisors on $Y$ such that:
  \begin{enumerate}[(i)]
  \item $(Y, G_Y+A)$ and $(Y, G_Y+A^\prime)$ are klt;
  \item $\Kc{Y}{G}+A$ and $\Kc{Y}{G}+A^\prime$ are both ample on $Y$;
  \item the cones $\mathcal{C}=\langle \Kc{Y}{G}, \Kc{Y}{G}+A
    \rangle$ and $\mathcal{C}^\prime=\langle \Kc{Y}{G}, \Kc{Y}{G}+A^\prime
    \rangle$ are generic;
  \item the MMP for $\Kc{Y}{G}$ with scaling by $A$, resp.\ $A^\prime$,
    ends in a Mf $X/S$, resp.\ $X^\prime/S^\prime$.
  \end{enumerate}
  Then the birational map $\varphi\colon X \dasharrow X^\prime$ is a
  composition of links $\varphi_i \colon X_i/S_i \dasharrow
  X_{i+1}/S_{i+1}$ of the Sarkisov program where each map
  $Y\dasharrow X_i$ is contracting.
\end{lem}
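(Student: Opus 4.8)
The plan is to interpolate between the two MMPs with scaling by $A$ and by $A'$ by moving inside a single finitely generated divisorial ring on $Y$, and to read off Sarkisov links from the chamber structure of that ring. First I would apply Corollary~\ref{cor:1} (with $\Delta_1=A$, $\Delta_2=A'$, after perturbing $A,A'$ slightly using item~(4) of the $2$-dimensional discussion so that the MMPs are unchanged) to obtain a generic three-dimensional cone
\[
\mathcal{C}''=\langle \Kc{Y}{G},\ \Kc{Y}{G}+A,\ \Kc{Y}{G}+A'\rangle
\subseteq \Div_\RR(Y),
\]
with $R(Y,\mathcal{C}'')$ finitely generated, after first enlarging to a cone spanning $N^1_\RR(Y)$ so Theorem~\ref{pro:1} applies. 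Note $\Kc{Y}{G}$ is not pseudoeffective, so $\Supp\mathfrak R$ is a proper subcone not containing $\Kc{Y}{G}$, and the chamber decomposition $\Sigma$ of Theorem~\ref{thm:decomposition} slices $\Supp\mathfrak R$ into finitely many two-dimensional cones.

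Next I would set up the combinatorics. The two edges $\langle \Kc{Y}{G}+A,\ \Kc{Y}{G}+t_{N+1}A\rangle$ and $\langle \Kc{Y}{G}+A',\ \Kc{Y}{G}+t'_{N'+1}A'\rangle$ of $\Supp\mathfrak R$ correspond, by the $2$-dimensional analysis, exactly to the given MMPs ending in $X/S$ and $X'/S'$. Choose a general point $a$ in the interior of $\Supp\mathfrak R$ (so its ample model is a Mori fibre space after one more MMP, or adjust so the interior chambers are Mori chambers) and connect the two edges by a path; alternatively, and more cleanly, I would pick general ample $\QQ$-divisors and consider the \emph{one-parameter family of rays} $r_s=\langle \Kc{Y}{G},\ \Kc{Y}{G}+A_s\rangle$ where $A_s$ interpolates linearly between $A$ and $A'$ inside $\mathcal C''$. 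For generic $s$, $r_s$ crosses the interior of $\Sigma$-chambers transversally and the associated MMP with scaling by $A_s$ ends in a Mori fibre space $X_s/S_s$; as $s$ varies, $X_s/S_s$ is constant except at finitely many values $s_1<\dots<s_m$ where $r_s$ passes through a lower-dimensional cone of $\Sigma$. At each such crossing I would identify the change $X_{s_j-0}/S_{s_j-0}\dasharrow X_{s_j+0}/S_{s_j+0}$ with a Sarkisov link: the genericity of $\mathcal C''$ (Definition~\ref{dfn:7}, controlling $\QQ$-factoriality of the $X_\sigma$ and the relative Picard ranks $\rho(X_\sigma/X_\tau)\le 1$) forces the local picture at the crossing to be precisely the $2$-ray game defining a link of type I--IV, exactly as in the classical Sarkisov proof of \cite{MR3019454}. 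Finally, the maps $Y\dasharrow X_s$ are all contracting: no component of $G_Y$ is contracted by assumption~(i) of Setup~\ref{setup:1}, and the ample models $\vf_\sigma\colon Y\dasharrow Y_\sigma$ of Theorem~\ref{thm:decomposition} are contracting by construction, so composing with the morphisms $Y_\sigma\to X_\tau$ keeps them contracting.

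The main obstacle I anticipate is the bookkeeping at the crossings: showing that when $r_s$ degenerates through a codimension-one face $\tau$ of the chamber fan, the transition is a \emph{single} Sarkisov link rather than a more complicated birational modification, and that the intermediate varieties $X_\sigma$ have exactly $\QQ$-factorial terminal singularities so that the link is of the required type. This is where genericity of $\mathcal C''$ does the heavy lifting — condition~(2) of Definition~\ref{dfn:7} guarantees $\rho(X_\sigma/X_\tau)\le1$ so the $2$-ray game over $X_\tau$ (or over the appropriate base) has the correct length — but one must carefully match the four combinatorial types of faces $\tau$ (according to whether the rational contractions on either side are fibrations or birational, and whether divisors are contracted) with the four types I--IV of Definition~\ref{dfn:4}. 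A secondary point to check is that perturbing $A$ and $A'$ into $\mathcal C''$ while keeping the two boundary MMPs literally unchanged is possible; this follows from item~(4) of the $2$-dimensional discussion together with Lemma~\ref{lem:2}, but it needs to be invoked with care so that the endpoints $X/S$ and $X'/S'$ are genuinely recovered, not merely approximated.
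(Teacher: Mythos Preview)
Your proposal is correct and follows essentially the same route as the paper: perturb $A$ and $A'$ via Corollary~\ref{cor:1} and \S\ref{sec:special-case:-2}(4) so that the three-dimensional cone $\widetilde{\mathcal{C}}=\langle \Kc{Y}{G},\,\Kc{Y}{G}+A,\,\Kc{Y}{G}+A'\rangle$ is generic without disturbing the two MMPs, then invoke the Hacon--McKernan argument of \cite{MR3019454} to walk along the boundary of $\Supp\mathfrak R(Y,\widetilde{\mathcal{C}})$ and read off the chain of Sarkisov links, with contractibility of $Y\dasharrow X_i$ coming for free since these are ample models. The paper simply outsources to \cite{MR3019454} the combinatorial bookkeeping at codimension-one crossings that you describe as the main obstacle.
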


\begin{proof}
  The proof is the argument of \cite{MR3019454}, which we sketch here
  for the reader's convenience. After a small perturbation of 
  $A$ and $A^\prime$ as in corollary~\ref{cor:1} that, as stated in
  \S~\ref{sec:special-case:-2}(4), does not change the two MMPs or
  their end products, the cone
  $\widetilde{\mathcal{C}}= \langle \Kc{Y}{G}, \Kc{Y}{G}+A,
  \Kc{Y}{G}+A^\prime \rangle$
  is generic. The argument of \cite{MR3019454} then shows how walking
  along the boundary of $\Supp \widetilde{\mathcal{C}}$ corresponds
  to a chain of Sarkisov links from $X/S$ to $X^\prime/S^\prime$. By
  construction, all maps from $Y$ are contracting.
\end{proof}

\subsection{Proof of Theorem~\ref{thm:3}}
\label{sec:proof-theorem-1}

 Write $\chi=\chi_{N-1} \circ \cdots \circ \chi_0$ where each 
\[
\chi_i\colon Y_i \dasharrow Y_{i+1}
\]
is a Mori flip, flop or inverse flip, and $Y=Y_0$, $Y^\prime=Y_{N}$. 

For all $\QQ$-divisors $G_Y$ on $Y$ denote by $G_{Y_i}$ the strict
transform on $Y_i$. Choose $G_Y$ such that for all $i\in \{0,\dots,N\}$:
\begin{enumerate}[(i)]
\item $G_{Y_i}$ satisfies the setup~\ref{setup:1}, and
\item $\chi_i$ is either a $(\Kc{Y_i}{G})$-flip or
  antiflip. \footnote{The purpose of $G$ is to make sure that there are
  no flops.}
\end{enumerate}
One way to choose $G_Y$ is as follows: if $\chi_i$ is a flop, choose
$G_{Y_i}$ ample, general, and very small on $Y_i$. If $G_{Y_i}$ is
small enough then for all $j$ if $\chi_j$ was a flip or antiflip then
it still is a flip or antiflip. On the other hand now $\chi_i$ is a
$(\Kc{Y_i}{G})$-flip. Some other flops may have become flips or
antilfips. If there are still flops repeat the process by adding, on
$Y_k$ such that $\chi_k$ is a $(\Kc{Y_k}{G})$-flop, a very small ample
divisor to $G_{Y_k}$, and so on until there are no flops left.

For all $i\in \{0,\dots, N\}$, we choose by induction on $i$ a big
divisor $A_i$ on $Y_i$\footnote{Here $A_j$ is not the transform of
  $A_i$ on $Y_j$: it is just another divisor.} such that
$\Kc{Y_i}{G}+A_i$ is ample, $\langle \Kc{Y_i}{G},
\Kc{Y_i}{G}+A_i\rangle$ is generic, and the MMP for $\Kc{Y_i}{G}$ with
scaling by $A_i$ terminates with a Mf $p_i\colon X_i \to S_i$. At the
start $p_0=p \colon X_0=X\to S_0=S$, but it will not necessarily be
the case that $p_N=p^\prime$. We prove, also by induction on $i$, that
for all $i$ the induced map $\varphi_i\colon X_i\dasharrow X_{i+1}$ is the
composition of Sarkisov links under $Y$. Finally we prove that the
induced map $X_N \dasharrow X^\prime$ is the composition of Sarkisov
links under $Y$.

Suppose that for all $j<i$ $A_j$ has been constructed. We consider two cases:
\begin{enumerate}[(a)]
\item If $\chi_{i-1}$ is a $(\Kc{Y_{i-1}}{G})$-flip, choose an ample
  $\QQ$-divisor $A_{i-1}^\prime$ on $Y_{i-1}$ such that
  $\langle \Kc{Y_{i-1}}{G}, \Kc{Y_{i-1}}{G}+A_{i-1}^\prime \rangle$ is
  generic and the MMP for $\Kc{Y_{i-1}}{G}$ with scaling by
  $A_{i-1}^\prime$ begins with the flip $\chi_{i-1}$. This can be
  accomplished as follows: if $\chi_{i-1}$ is the flip of the extremal
  contraction $\gamma_{i-1}\colon Y_{i-1}\to Z_{i-1}$ then
  $A^\prime_{i-1}=L_{i-1}+\gamma_{i-1}^\star (N_{i-1})$ where
  $L_{i-1}$ is ample on $Y_{i-1}$ and $N_{i-1}$ is ample enough on
  $Z_{i-1}$. Now set 
\[A_i=\chi_{i-1\,\star } \Bigl((t_1-\varepsilon) A_{i-1}^\prime\Bigr)
\] where $\Kc{Y_{i-1}}{G}+t_1A^\prime_{i-1}$ is $\gamma_{i-1}$-trivial
and $0<\varepsilon <\!<1$. Note that $\langle \Kc{Y_{i-1}}{G},
\Kc{Y_{i-1}}{G}+A_{i-1}^\prime \rangle$ generic implies $\langle
\Kc{Y_i}{G}, \Kc{Y_i}{G}+A_i \rangle$
generic.\footnote{$A_{i-1}^\prime$ is ample hence (moving in linear
  equivalence class) $(Y_{i-1}, G_{Y_{i-1}}+A^\prime_{i-1})$ is
  klt--in fact even terminal if we want. So since $t_1<1$ then even
  more $(Y_i, G_{Y_i}+A_i)$ is klt.} We take $p_i\colon X_i\to S_i$ to
be the end product of the MMP for $\Kc{Y_i}{G}$ with scaling by
$A_i$. It follows from lemma~\ref{lem:6}, applied to $Y_{i-1}$ and
the divisors $A_{i-1}$, $A_{i-1}^\prime$, that the induced map
$\varphi_i\colon X_{i-1}\dasharrow X_i$ is a composition of Sarkisov
links under $Y_{i-1}$ and hence, since $Y\dasharrow Y_{i-1}$ is an
isomorphism in codimension one, under $Y$.
\item If $\chi_{i-1}$ is a $(\Kc{Y_{i-1}}{G})$-antiflip, choose $A_i$ ample
  on $Y_i$ such that
  $\langle \Kc{Y_i}{G}, \Kc{Y_i}{G}+A_i \rangle$ is
  generic and the MMP for $\Kc{Y_i}{G}$ with scaling
  by $A_i$ begins with the flip $\chi_{i-1}^{-1}$. We take
  $p_i\colon X_i\to S_i$ to be the end product of the MMP for
  $\Kc{Y_i}{G}$ with scaling by $A_i$. It follows from
  lemma~\ref{lem:6}, applied to $Y_{i-1}$ and the divisors $A_{i-1}$,
  \[
A_{i-1}^\prime =\chi_{i-1\, \star}^{-1} \Bigl((t_1-\varepsilon) A_i\Bigr)
\] where $\Kc{Y_i}{G}+t_1A_i$ is $\chi_{i-1}^{-1}$-trivial and
$0<\varepsilon <\!<1$, that the induced map $\varphi_{i-1}\colon
X_{i-1}\dasharrow X_i$ is a composition of Sarkisov links under
$Y_{i-1}$ and hence, since $Y\dasharrow Y_{i-1}$ is an isomorphism in
codimension one, under $Y$.
\end{enumerate}
Finally, choose $A^\prime$ ample on $Y^\prime$ such that $\langle
\Kc{Y^\prime}{G}, \Kc{Y^\prime}{G}+A^\prime\rangle$ is generic and the
MMP for $\Kc{Y^\prime}{G}$ with scaling by $A^\prime$ terminates with
the Mf $p^\prime \colon X^\prime \to S^\prime$. It follows from
lemma~\ref{lem:6}, applied to $Y_N=Y^\prime$ (and the divisors $A_N$,
$A^\prime$), that the induced map $\varphi_N\colon X_r\dasharrow
X^\prime$ is a composition of Sarkisov links under $Y^\prime$ and
hence, since $Y\dasharrow Y^\prime$ is an isomorphism in codimension
one, under $Y$. \qed

\section{Proof of theorem~\ref{thm:1}}
\label{sec:mainthm}

Let $(X,D)$ and $(X^\prime, D^\prime)$ with $p\colon X \to S$ and
$p^\prime \colon X^\prime \to S^\prime$ be (t,lc) Mf CY pairs, and let
$\varphi\colon X\dasharrow X^\prime$ be a volume preserving birational
map. Theorem~\ref{thm:2} gives a diagram:
\[
\xymatrix{(Y,D_Y)\ar[d]_g\ar@{-->}[r]^\chi & (Y^\prime,D_{Y^\prime})\ar[d]^{g^\prime}\\
(X,D) \ar@{-->}[r]^\vf &  (X^\prime,D^\prime)
}
\]
where $(Y, D_Y)$, $(Y^\prime, D_{Y^\prime})$ are (t,dlt)
$\QQ$-factorial CY pairs, $g$, $g^\prime$ are volume preserving and
$\chi$ is a volume preserving composition of Mori flips, flops and
inverse flips. In particular, if we forget the $D$s, we are in the
situation of \S~\ref{sec:basic-setup}, so that by theorem~\ref{thm:3}
$\varphi \colon X \dasharrow X^\prime$ is the composition of Sarkisov
links $\varphi_i\colon X_i/S_i\dasharrow X_{i+1}/S_{i+1}$ such that
all induced maps $g_i\colon Y\dasharrow X_i$ are contracting. Denoting
by $D_i=g_{i\, \star} D_Y$, it is clear that for all $i$ $g_i$ is
volume preserving hence also
$\varphi_i\colon (X_i, D_i)\dasharrow (X_{i+1},D_{i+1})$ is volume
preserving, and $(X_i,D_i)$ is a (t,lc) CY pair. \qed

% \section{Singularities of 3-fold CY pairs}
% \label{sec:3-fold:sings}

% \input{3folds}

%\bibliography{ck}

   \end{document}